\numberwithin{equation}{section}
\newtheorem{theorem}{Theorem}[section]
\newtheorem{lemma}[theorem]{Lemma}
\newtheorem{corollary}[theorem]{Corollary}
\newtheorem{proposition}[theorem]{Proposition}
\newtheorem{claim}[theorem]{Claim}
\theoremstyle{definition}
\newtheorem{definition}[theorem]{Definition}
\newtheorem*{acknowledgment}{Acknowledgments}
\theoremstyle{remark}
\numberwithin{equation}{section}
\newcommand{\lk}{{\rm lk}}
\begin{document}

\title{Non--braid positive hyperbolic $L$--space knots}

\author{Keisuke Himeno
}

\address{Graduate School of Advanced Science and Engineering, Hiroshima University,
1-3-1 Kagamiyama, Higashi-hiroshima, 7398526, Japan}
\email{himeno-keisuke@hiroshima-u.ac.jp}

\begin{abstract}
An $L$--space knot is a knot that admits a positive Dehn surgery yielding an $L$--space. Many known hyperbolic $L$--space knots are braid positive, meaning they can be represented as the closure of a positive braid. Recently, Baker and Kegel showed that the hyperbolic $L$--space knot $o9\_30634$ from Dunfield's census is not braid positive, and they constructed infinitely many candidates for hyperbolic $L$--space knots that may not be braid positive. However, it remains unproven whether their examples are genuinely non--braid positive. In this paper, we construct infinitely many hyperbolic $L$--space knots that are not braid positive, and our examples are distinct from those considered by Baker and Kegel.
\end{abstract}

\renewcommand{\thefootnote}{}
\footnote{2020 {\it Mathematics Subject Classification.} 57K10, 57K18.

{\it Key words and phrases.} $L$--space knot, braid positive.}

\maketitle


\section{Introduction}
An {\it $L$--space\/} is a rational homology $3$--sphere whose (hat version) Heegaard Floer homology has rank equal to the order of its first homology. 
A knot is called an {\it $L$--space knot\/} if it admits a positive Dehn surgery yielding an $L$--space. 
$L$--space knots were originally motivated by the study of knots admitting lens space surgeries \cite{OS05A}, and they continue to be an active subject of research.

In this paper, we consider the braid positivity of $L$--space knots. A knot or link is said to be {\it braid positive\/} if it can be expressed as the closure of a positive braid. 
Many known $L$--space knots are braid positive; for example, positive torus knots are  $L$--space knots, and then they are braid positive.
On the other hand, it is known that not all $L$--space knots are braid positive; for example, the $(2,3)$--cable of the right-handed trefoil is not braid positive (see Example 1 of \cite{ABGKLMOSTWW23}).
However, the existence of non--braid positive hyperbolic $L$--space knots remained an open problem for some time (see Problem 31.2 of \cite{HLR17}, Question 2 of \cite{ABGKLMOSTWW23}).

Recently, Baker and Kegel examined Dunfield's list of $632$ hyperbolic $L$--space knots. They showed that all but one of these knots are braid positive, and that the exceptional knot, $o9\_30634$, is not braid positive \cite{BK24}. Furthermore, they constructed infinitely many candidates for non--braid positive hyperbolic $L$--space knots, although they could not prove that any of their examples are genuinely non--braid positive.

The purpose of this paper is to give infinitely many hyperbolic $L$--space knots that are not braid positive. 
While all candidates by Baker and Kegel are represented as closures of $4$--braids, we construct the infinite family of such knots by increasing the number of strands in the braid. For each $n \ge 2$, we define a knot $K_n$ as the closure of a $2n$--braid, as follows. Let $[\varepsilon_1 i_1, \varepsilon_2 i_2,\ldots, \varepsilon_m i_m]$ denote the braid $\sigma_{i_1}^{\varepsilon_1}\sigma_{i_2}^{\varepsilon_2}\cdots\sigma_{i_m}^{\varepsilon_m}$, where $\sigma_i$ is the $i$-th standard generator of the degree $2n$ braid group and $\epsilon_j=\pm 1$. Define the braid  
\[
X_n=[n,n-1,n+1,n-2,n,n+2,\ldots,1,3,\ldots,2n-1,2n-2,\ldots,4,2,\ldots,n+1,n-1,n],
\]
as illustrated in Figure \ref{X_n}.
Let $K_n\ (n\ge 2)$ be a knot represented by the closure of $2n$--braid
\[
\beta_n=X_n^3\cdot [-1,-2,\ldots,-(n-1),n,n-1,n-2,\ldots,2,1,1,2,3,\ldots,n],
\]
see Figure \ref{K_n}. Note that $K_2$ coincides with the knot $o9\_30634$. We also regard $K_1$ as the $(2,5)$--torus knot. Although $K_n$ for odd $n$ does not appear in the statement of Theorem \ref{thm_main}, we use them in the proof.

\begin{figure}
\centering
\includegraphics[scale=0.1]{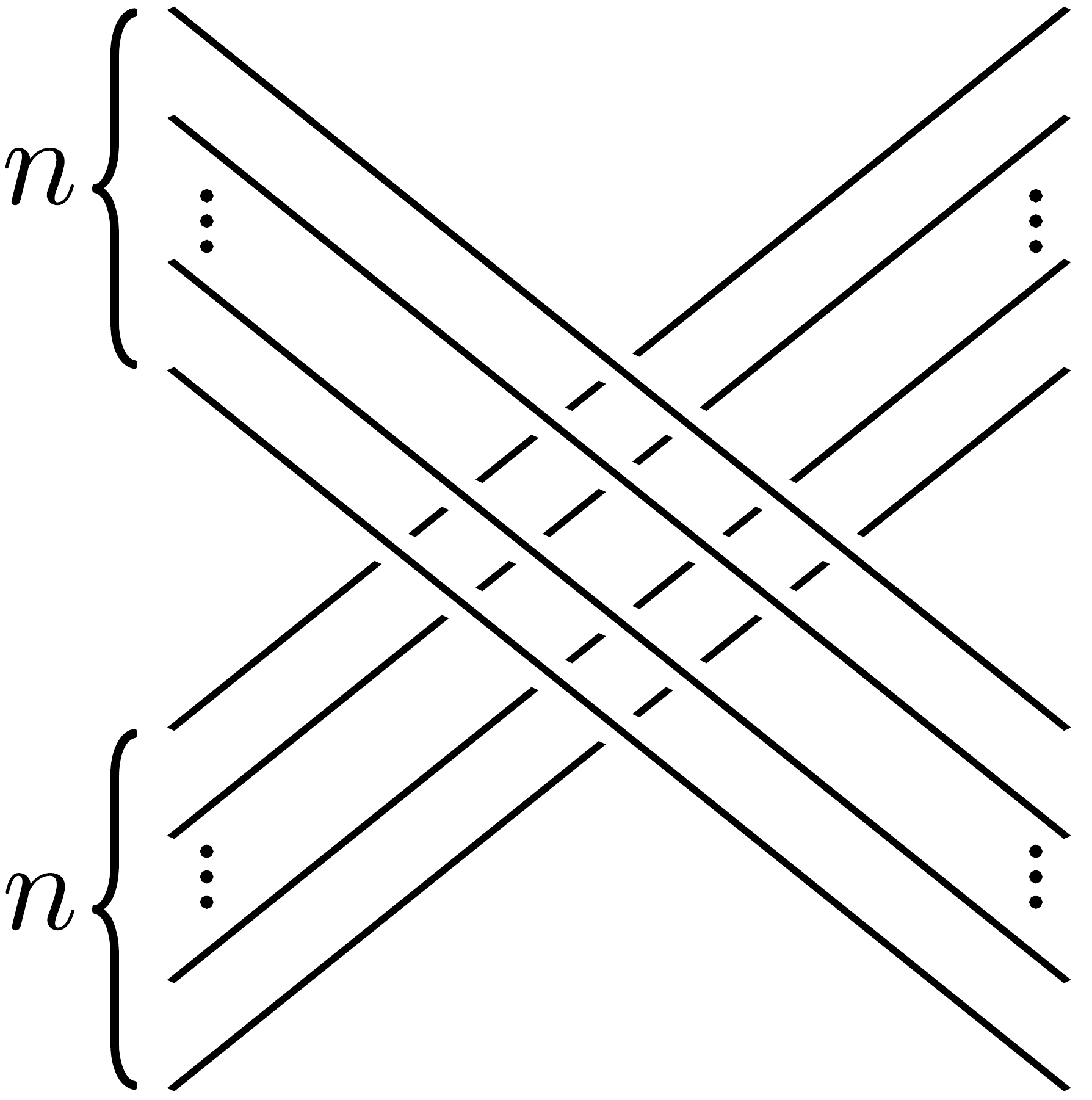}
\caption{The braid $X_n$.}
\label{X_n}
\end{figure}

\begin{figure}
\centering
\includegraphics[scale=0.13]{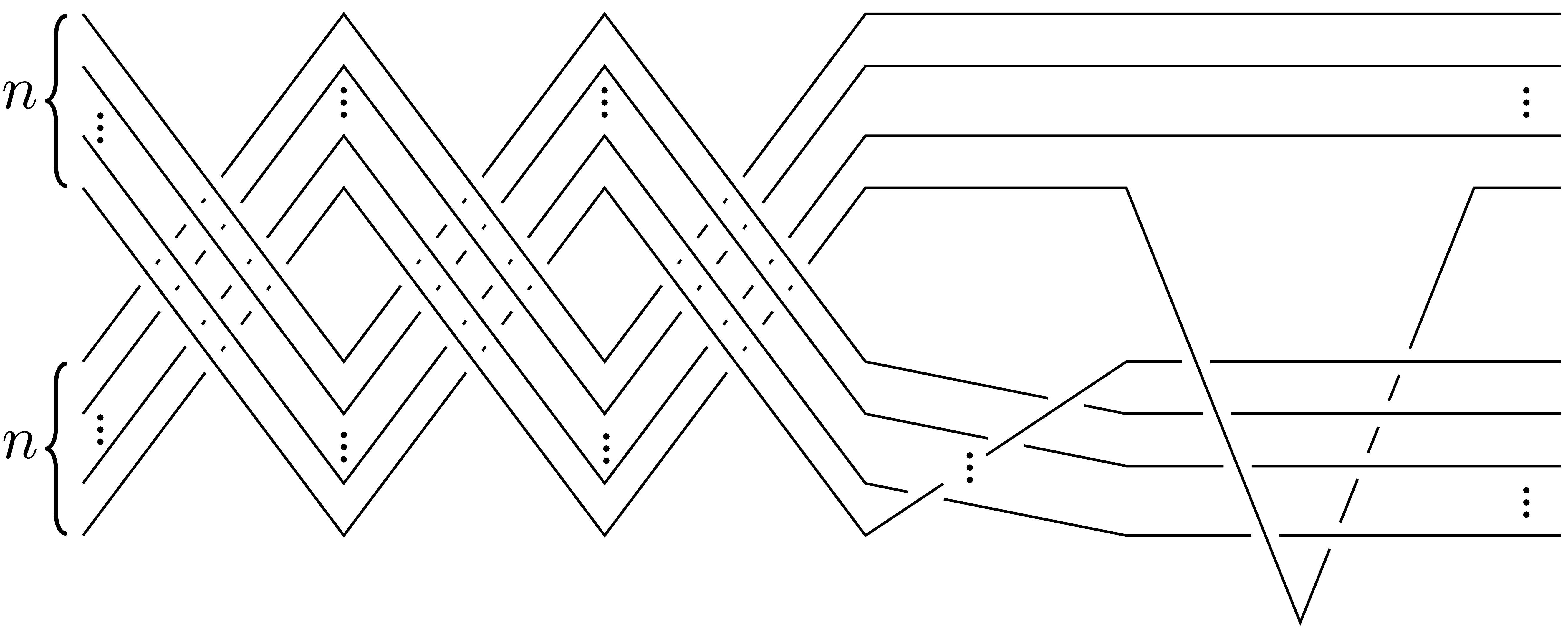}
\caption{The knot $K_n$ is the closure of this braid.}
\label{K_n}
\end{figure}

\begin{theorem}\label{thm_main}
When $n$ is even, the knot $K_n$ is a hyperbolic $L$--space knot that is not braid positive.
\end{theorem}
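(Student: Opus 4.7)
The plan is to verify, for all even $n \ge 2$, that $K_n$ is hyperbolic, is an $L$-space knot, and is not braid positive, in a uniform manner in $n$. Since $K_2$ coincides with Baker-Kegel's $o9\_30634$, whose three properties are established in \cite{BK24}, the substantive work is a family-wide argument as $n$ grows.

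I would begin by computing $\Delta_{K_n}(t)$ in closed form from the Burau matrix of $\beta_n$, exploiting the $X_n^3$ block periodicity and the explicit tail $[-1,-2,\ldots,-(n-1),n,n-1,\ldots,2,1,1,2,\ldots,n]$. The output feeds all three property verifications. For the $L$-space property, I would seek an explicit Dehn-surgery slope on $K_n$ producing a recognizable $L$-space: the block structure $\beta_n = X_n^3 \cdot [\cdots]$ invites either a direct splitting of the surgery trace along the repeated $X_n^3$ into a graph-manifold $L$-space, or a twist-family presentation relating $K_n$ to a torus knot (plausibly coming from the closure of $X_n^3$ alone, or a near neighbour) through which the $L$-space property propagates via Heegaard Floer surgery exact triangles. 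The explicit $\Delta_{K_n}$ serves as an Ozsv\'ath-Szab\'o cross-check of the alternating-exponent shape $\sum_i (-1)^i t^{n_i}$. For hyperbolicity, I would rule out torus and satellite structures: the former from the identified $L$-space surgery (provided it is not a lens space), the latter from the observation that satellite $L$-space knots impose multiplicative/substitution constraints on the Alexander polynomial (cf.\ the Hom-Lidman-Vafaee-type restrictions) that one checks the computed $\Delta_{K_n}$ does not satisfy.

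The hardest step, and the main obstacle, is ruling out braid positivity. Here I would adapt Baker-Kegel's Alexander-polynomial obstruction. For an $L$-space knot $K$ with $\Delta_K(t) = \sum_i (-1)^i t^{n_i}$, the exponent sequence $n_0 < n_1 < \cdots < n_{2g}$ is constrained, in the braid positive case, by the formal semigroup attached to any positive braid presentation (whose generator bounds are tied to the braid index). The task is to isolate a single structural feature of the exponent sequence of $\Delta_{K_n}$—an early or late gap, a semigroup inequality violation, or an analogous combinatorial pattern—that persists for every even $n$ and rules out a positive braid representation of any width. Producing such a uniform witness, in place of case-by-case computer verifications, is the real crux of the argument.
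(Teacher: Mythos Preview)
What you have written is a research plan, not a proof: none of the three properties is actually established, and for the central one---non--braid positivity---you explicitly concede that ``producing such a uniform witness \ldots\ is the real crux of the argument'' without producing it. Let me point to the concrete gaps and contrast with what the paper does.

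\textbf{Non--braid positivity.} You propose to extract an obstruction from the Alexander polynomial and the formal semigroup of an $L$--space knot. There is no known Alexander--polynomial criterion strong enough to rule out braid positivity for these knots; Baker--Kegel's argument for $o9\_30634$ was not an Alexander--polynomial obstruction, and it is misleading to describe it as one. The paper instead uses Ito's HOMFLY criterion: it computes, by an inductive skein--tree argument, that the top term of the zeroth coefficient polynomial $p^0_{K_n}(v)$ is $(-1)^n v^{3n^2+3n}$, and compares this with the genus $g(K_n)=\tfrac{3}{2}n^2-\tfrac{1}{2}n+1$ (obtained from an explicit quasipositive fiber surface). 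When $n$ is even this forces a \emph{negative} coefficient in $\widetilde P_{K_n}(\alpha,z)$, violating Ito's positivity theorem. The delicacy of this computation is underscored by the paper's own remark that the criterion fails to detect the odd--$n$ case; an Alexander--polynomial approach, being strictly weaker, has no realistic chance of succeeding uniformly.

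\textbf{$L$--space property.} ``Seek an explicit slope'' and ``invites either a graph--manifold splitting or a twist--family argument'' is not a proof. The paper gives an explicit strongly invertible presentation of $K_{2k}$ as surgery on a link $K\cup C_1\cup C_2\cup C_3$, applies the Montesinos trick to identify $(12k^2+2k)$--surgery on $K_{2k}$ with the double branched cover of a concrete link $\ell$, and then resolves $\ell$ into Montesinos pieces whose double branched covers are Seifert fibered $L$--spaces by the Lisca--Stipsicz criterion, with the determinants adding correctly so that the surgery triad propagates the $L$--space property.

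\textbf{Hyperbolicity.} Your plan to exclude satellite structure via Alexander--polynomial constraints on satellite $L$--space knots is plausible in outline but unexecuted, and the torus--knot exclusion via ``the identified $L$--space surgery is not a lens space'' presupposes the $L$--space step you have not carried out. The paper takes a direct route: it exhibits an explicit train--track graph and graph map for (a conjugate of) $\beta_n$, verifies efficiency and irreducibility by hand to conclude via Bestvina--Handel that $\beta_n$ is pseudo-Anosov, and then shows the Dehornoy floor of a conjugate exceeds $1$, so that Ito's theorem forces the closure $K_n$ to be hyperbolic.
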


Since the knots $\{K_n\}$ are mutually distinct (see Lemma \ref{lem_Kn_topterm}), we have the following corollary.

\begin{corollary}
There exist infinitely many non--braid positive hyperbolic $L$--space knots.
\end{corollary}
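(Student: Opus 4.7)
The plan is to combine Theorem~\ref{thm_main} with Lemma~\ref{lem_Kn_topterm}. Theorem~\ref{thm_main} already produces, for each even $n\ge 2$, a hyperbolic $L$--space knot $K_n$ that is not braid positive, so the corollary reduces to verifying that the family $\{K_n\}_{n \text{ even}}$ contains infinitely many distinct knot types; this is precisely the content of Lemma~\ref{lem_Kn_topterm}.

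My strategy for that distinctness statement is to use the Alexander polynomial, which is particularly rigid for $L$--space knots: by Ozsv\'ath--Szab\'o, $\Delta_{K_n}(t)$ is a signed sum of monomials with leading coefficient $\pm 1$ whose top degree equals $2g(K_n)$, where $g$ denotes the Seifert genus. Thus it suffices to isolate the leading monomial of $\Delta_{K_n}(t)$ and show that its exponent (equivalently $g(K_n)$) depends genuinely on $n$.

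To compute the top term, I would exploit the explicit inductive structure of $X_n$: passing from $X_n$ to $X_{n+1}$ amounts to inserting new symmetric blocks of generators, which suggests a recursion for the reduced Burau matrix of $\beta_n$ from which the top coefficient of $\Delta_{K_n}(t)$ can be read off after unreduced normalization. A cleaner alternative is to record the exponent sum of $\beta_n$, which grows with $n$, apply a Morton--Franks--Williams style bound on the HOMFLY polynomial to control its top degree, and then invoke the fact that $L$--space knots are fibered (Ni) so that $\deg \Delta_{K_n}(t)/2 = g(K_n)$ translates the HOMFLY bound into a genuine lower bound on the genus as a function of $n$.

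The main obstacle will be controlling the leading coefficient precisely enough to rule out cancellations that could keep the genus constant as $n$ varies; because the braid $\beta_n$ contains a substantial negative part, one cannot simply invoke sharpness results for positive braids. However, the symmetric, self-similar shape of $X_n$ makes a recursive calculation tractable, and only a coarse formula for the top exponent is needed. Once the leading monomial of $\Delta_{K_n}(t)$ is shown to depend nontrivially on $n$, the $K_n$ with $n$ even and $n\ge 2$ are pairwise distinct, and Theorem~\ref{thm_main} then supplies the desired infinite family of non--braid positive hyperbolic $L$--space knots.
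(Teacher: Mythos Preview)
Your first paragraph is exactly right and matches the paper: the corollary is deduced from Theorem~\ref{thm_main} together with the distinctness of the $K_n$, which the paper attributes to Lemma~\ref{lem_Kn_topterm}. Where you diverge is in the remaining paragraphs, where you sketch an independent route to distinctness via the Alexander polynomial and genus. This is unnecessary and somewhat misdirected: Lemma~\ref{lem_Kn_topterm} is not a statement about $\Delta_{K_n}$ but about the zeroth coefficient HOMFLY polynomial, and it already asserts that the top term of $p^0_{K_n}(v)$ is $(-1)^n v^{3n^2+3n}$. Since $3n^2+3n$ is strictly increasing in $n$, the $K_n$ are pairwise distinct immediately, with no further computation needed. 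If you prefer a genus argument, the paper also provides that directly in Lemma~\ref{lem_genus}, which gives $g(K_n)=\tfrac{3}{2}n^2-\tfrac{1}{2}n+1$ for even $n$ via the quasipositive fiber surface; there is no need for Burau recursions or Morton--Franks--Williams bounds. In short, your reduction is correct, but the invariant the paper actually uses to separate the $K_n$ is the HOMFLY zeroth coefficient (already computed for the non--braid--positivity proof), not the Alexander polynomial.
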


We remark that when $n$ is odd, we also expect $K_n$ to be a non--braid positive hyperbolic $L$--space knot. However, in this case, the criterion we use for braid positivity failed to detect it.

 \begin{acknowledgment}
The author would like to thank Masakazu Teragaito for his thoughtful guidance and helpful discussions about this work. Additional thanks to Tetsuya Ito, who gave helpful comments for the HOMFLY polynomial. The author was supported by JST SPRING, Grant Number JPMJSP2132.
\end{acknowledgment}

\section{Non braid positivity}
In this section, we prove that $K_n$ is not braid positive when $n$ is even. Our proof is based on Ito's criterion using the HOMFLY polynomial \cite{Ito22B}.

\subsection{HOMFLY polynomial and its zeroth coefficient polynomial}
For an oriented link $L$, the HOMFLY polynomial $P_L(v,z)$ is a two--variable Laurent polynomial defined by the skein relation
\[
v^{-1}P_{L_+}(v,z)-vP_{L_-}(v,z)=zP_{L_0}(v,z),
\]
together with $P_{U}(v,z)=1$, where $U$ is the unknot. Here, the links (or diagrams) $L_+$, $L_-$ and $L_0$ coincide outside a small $3$--ball, and inside the $3$--ball, they differ by having the $+1$, $-1$ and $0$--tangle, respectively. Throughout this paper, all braids are assumed to be oriented from left to right.

In \cite{Ito22B}, Ito provided a criterion for the braid positivity of a link using the HOMFLY polynomial. We state the result in the case of knots. Let 
\[
\widetilde P_K(\alpha,z)=(-\alpha)^{-g(K)}P_K(v,z)|_{-v^2=\alpha},
\]
where $g(K)$ denotes the genus of a knot $K$.

\begin{theorem}[{\cite[Theorem 2]{Ito22B}}]\label{thm_Ito_braidpositive}
If $K$ is a braid positive knot, then $\widetilde P_K(\alpha,z)$ is positive, that is, all non-zero coefficients of $\widetilde P_K(\alpha,z)$ are positive integers.
\end{theorem}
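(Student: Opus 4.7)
The plan is to reduce the theorem to a sign-alternation statement on the coefficients of $P_K(v,z)$, and prove that statement by induction on the crossing number of a positive braid word. Writing $P_K(v,z)=\sum_{i,k} p_{i,k}\,v^{2i}z^k$, the substitution $-v^2=\alpha$ together with the normalization $(-\alpha)^{-g(K)}$ gives
\begin{equation*}
\widetilde P_K(\alpha,z) = \sum_{i,k}(-1)^{i-g(K)}p_{i,k}\,\alpha^{\,i-g(K)}z^k.
\end{equation*}
Thus the theorem is equivalent to the conjunction of (i) $p_{i,k}=0$ for $i<g(K)$, and (ii) $(-1)^{i-g(K)}p_{i,k}\ge 0$ otherwise. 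Claim (i) is the Morton--Franks--Williams inequality together with its sharpness for positive braid closures (Franks--Williams), which also guarantees that the normalization $(-\alpha)^{-g(K)}$ produces a genuine polynomial rather than a Laurent polynomial in $\alpha$.

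To prove (ii) I would induct on the number of crossings $c$ of a positive $n$-braid $\beta$ with $\hat\beta=K$. Since skein resolutions can turn knots into links, I first extend the statement to closures of arbitrary positive braids, replacing $g(K)$ by the Seifert genus of the canonical fiber surface and tracking an extra factor $\delta^{\ell-1}$ for the unknotted extra components (with $\delta=(v^{-1}-v)/z$). The base case is the trivial braid, whose closure is the unlink and whose HOMFLY polynomial $\delta^{\ell-1}$ expands directly into the required alternating-sign pattern.

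For the inductive step, pick a generator $\sigma_i$ occurring in $\beta$ and apply the HOMFLY skein
\begin{equation*}
P_{\hat\beta}(v,z)=v^2 P_{\hat\beta_-}(v,z)+vz\,P_{\hat\beta_0}(v,z),
\end{equation*}
where $\beta_0$ is obtained by deleting the chosen $\sigma_i$ and $\beta_-$ by flipping it to $\sigma_i^{-1}$. The smoothed term $\hat\beta_0$ is a positive braid closure with $c-1$ crossings to which the induction hypothesis applies, and the prefactor $vz$ shifts $v^2$-exponents by $1$ while reversing the sign pattern, matching what is required for $\widetilde P_{\hat\beta}$. The hard part is the term $v^2 P_{\hat\beta_-}$, since $\hat\beta_-$ is no longer a positive braid closure. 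The cleanest route I know is to work inside the Hecke algebra $H_n$ of type $A_{n-1}$: the HOMFLY polynomial is realized as the Ocneanu trace of the image of $\beta$, and a positive braid word maps to a product of the positive generators $T_i$. Using the quadratic relation $T_i^2=zT_i+1$, one expands this product as a nonnegative integer combination of standard basis elements $T_w$ with coefficients in $\mathbb{Z}_{\ge 0}[z]$, and the theorem reduces to a sign-alternation statement for the Ocneanu trace of each $T_w$, which can be verified from its inductive definition. The main obstacle is precisely the control of the $v^2 P_{\hat\beta_-}$ term: a purely diagrammatic skein induction risks sign cancellations that are hard to track, and the Hecke-algebra reformulation is what lets positivity propagate through the induction while staying inside the positive cone spanned by the $T_w$.
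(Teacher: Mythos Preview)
The paper does not give its own proof of this theorem: it is quoted verbatim from Ito's paper and used as a black box. The relevant machinery that Ito uses is, however, visible in the paper as Proposition~\ref{Prop_positive_fulltwist} (van Buskirk) together with the inductive computation in the proof of Lemma~\ref{Lem_zeroth_degree_bound}. The point is that every braid positive link which is not an unlink can be rewritten as the closure of a positive word $\sigma_i^2\beta'$; applying the skein relation at one of those two crossings gives
\[
P_{L(\sigma_i^2\beta')} \;=\; v^2\,P_{L(\beta')} \;+\; vz\,P_{L(\sigma_i\beta')},
\]
and both $L(\beta')$ and $L(\sigma_i\beta')$ are again positive braid closures with strictly fewer crossings. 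This keeps the entire induction inside the world of positive braids, and the sign pattern for $\widetilde P$ follows immediately.

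Your proposal correctly isolates the two ingredients (the lower $v$--degree bound and the sign--alternation), and your reduction (i) is fine. The difficulty is in (ii). Your first skein attempt, resolving at an arbitrary $\sigma_i$, produces $\hat\beta_-$ with a negative crossing, and you rightly flag this. Your fix via the Hecke algebra is essentially the same move in disguise: the relation $T_i^2=zT_i+1$ is exactly the $\sigma_i^2$--skein above, and the expansion of a positive word into $\mathbb{Z}_{\ge 0}[z]$--combinations of the basis $\{T_w\}$ just iterates that skein. What this buys you is a reduction to the permutation braids $T_w$, but then you still have to prove the sign pattern for $P_{\widehat{T_w}}$ for every $w\in S_n$. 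Your sentence ``which can be verified from its inductive definition'' is where the real work hides: the Ocneanu recursion $\mathrm{tr}(xT_{n-1})=\tau\,\mathrm{tr}(x)$ does not directly give $\mathrm{tr}(T_w)$ in closed form, because after cycling out one $T_{n-1}$ the remaining product $T_{n-2}\cdots T_j\,T_{w'}$ in $H_{n-1}$ is typically \emph{not} a single basis element and must be re-expanded. You are then back to proving the theorem for another positive braid, so the argument is circular without an independent handle on the base case. That independent handle is precisely van Buskirk's lemma (Proposition~\ref{Prop_positive_fulltwist}), which guarantees that even a permutation braid, once it is not an unlink, can be rewritten with a visible $\sigma_i^2$ so that the skein descends. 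Once you add that lemma, the Hecke detour becomes unnecessary: you can run the $\sigma_i^2$--skein directly, as in Ito's argument and as in the proof of Lemma~\ref{Lem_zeroth_degree_bound}.
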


The HOMFLY polynomial $P_L(v,z)$ can be expressed in the form
\[
P_L(v,z)=(v^{-1}z)^{-\# L+1}\sum_{i=0}p^i_{L}(v)z^{2i},
\]
where $\# L$ is the number of components of the link $L$. The polynomial $p^i_L(v)$ is called the {\it $i$-th coefficient (HOMFLY) polynomial\/} of $L$. In this paper, we focus on the {\it zeroth coefficient polynomial} $p^0_L(v)$. It is known that this polynomial satisfies several important properties\textup{;} see, for example \cite[Section 2]{Ito22A} and \cite[Section 2]{Tak24}.

The zeroth coefficient polynomial $p^0_L(v)$ satisfies the following skein relation\textup{:}
\begin{equation}\label{zeroth_skein}
v^{-2}p^0_{L_+}(v)-p^0_{L_-}(v)=\left\{
\begin{array}{ll}
p^0_{L_0}(v) & (\delta=0),\\
0 & (\delta=1),
\end{array}
\right.
\end{equation}
where $\delta=\frac{1}{2}(\#L_+-\#L_0+1)\in\{0,1\}$. In particular, $\delta=0$ if $L_+$ is a knot. 
Furthermore, for a two--component link $L=k_1\cup k_2$, we have
\begin{equation}\label{zeroth_skein_link}
p^0_L(v)=(v^{-2}-1)v^{2\cdot \lk(k_1,k_2)}p^0_{k_1}(v)p^0_{k_2}(v),
\end{equation}
where $\lk(k_1,k_2)$ is the linking number of $k_1$ and $k_2$. 
In particular, when $L_+$ is a knot (hence $L_-$ is also a knot and $L_0$ is a two--component link $k_1\cup k_2$), the skein relation \eqref{zeroth_skein} can be rewritten as
\begin{equation}\label{zeroth_skein_NtoP}
p^0_{L_-}(v)=v^{-2}p^0_{L_+}(v)+(1-v^{-2})v^{2\cdot \lk(k_1,k_2)}p^0_{k_1}(v)p^0_{k_2}(v).
\end{equation}


\subsection{Degree of $p^0_L$ for a braid positive link $L$}
For a braid $\beta$, let $L(\beta)$ denote the closure of $\beta$. 
A positive braid $\beta$ is a {\it minimal positive braid} if the number of strands of $\beta$ is minimum among all the positive braid representative of $L(\beta)$.
The following proposition is useful when applying the skein relation to a braid positive link.

\begin{proposition}[{\cite[Lemma 2]{vB85}}]\label{Prop_positive_fulltwist}
Let $L$ be a braid positive link that is not an unlink. Then there exists a positive braid $\beta$ such that $L$ is the closure of a positive braid of the form $\sigma_i^2\beta$ for some $i$. Moreover, such a positive braid representative $\sigma_i^2\beta$ of $L$ can be taken so that it is a minimal positive braid.
\end{proposition}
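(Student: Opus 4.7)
The plan is to reduce the proposition to the following structural claim: if $L$ is not an unlink and $\gamma$ is any minimal positive braid representative of $L$, then some positive word equivalent to $\gamma$, via braid relations and cyclic rotation (both of which preserve length and number of strands), contains a substring of the form $\sigma_i^2$ for some $i$.

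Granting this claim, the proposition follows at once. Starting from a minimal positive representative $\gamma$ of $L$, one applies positive braid relations and a cyclic rotation to produce a word $\sigma_i^2 \beta$, where $\beta$ is positive. Since cyclic rotation corresponds to braid conjugation and hence preserves the closure by the first Markov move, the braid $\sigma_i^2\beta$ represents the same link $L$, has the same number of strands as $\gamma$, and has the same length as $\gamma$. Hence $\sigma_i^2 \beta$ is itself a minimal positive representative.

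To prove the structural claim, I would argue by contradiction. Suppose $\gamma$ is a minimal positive $n$-braid representative of $L$, yet no positive word equivalent to $\gamma$, and no cyclic rotation of such a word, contains $\sigma_i^2$ for any $i$. I would analyze the occurrences of $\sigma_{n-1}$ in $\gamma$. If $\sigma_{n-1}$ appears exactly once in some word equivalent to $\gamma$, then a cyclic rotation moves it to the end, and Markov destabilization yields a positive $(n-1)$-braid representative of $L$, contradicting the minimality of $\gamma$. If instead $\sigma_{n-1}$ appears at least twice, then between any two cyclically consecutive occurrences there must be at least one $\sigma_{n-2}$: every other generator $\sigma_j$ with $j \le n-3$ commutes with $\sigma_{n-1}$, so in the absence of an intervening $\sigma_{n-2}$ the two $\sigma_{n-1}$'s could be commuted together to form $\sigma_{n-1}^2$, contradicting the hypothesis. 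One then uses the braid relation $\sigma_{n-1}\sigma_{n-2}\sigma_{n-1} = \sigma_{n-2}\sigma_{n-1}\sigma_{n-2}$ together with further commutations to iteratively reduce the number of occurrences of $\sigma_{n-1}$, eventually producing a word in which $\sigma_{n-1}$ appears only once and triggering destabilization. The base case $n=1$ is vacuous, since then $\gamma$ is empty and its closure is an unlink.

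The main obstacle is the combinatorial bookkeeping in the last step: rigorously controlling how applications of the braid relation and commutations can decrease the number of $\sigma_{n-1}$'s without introducing new configurations that evade the argument, and ensuring that the no-square hypothesis is either violated (giving the desired $\sigma_i^2$) or eventually forces destabilization. A careful case analysis of the positions and neighbors of repeated occurrences of $\sigma_{n-1}$, and an induction on the number of such occurrences, should close the argument.
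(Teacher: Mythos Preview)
The paper does not prove this proposition at all; it is quoted from van Buskirk \cite{vB85} and used as a black box in the induction for Lemma~\ref{Lem_zeroth_degree_bound}. So there is no in-paper argument to compare against, and your task was really to reconstruct van Buskirk's lemma.

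Your overall framework is correct and is essentially the classical one. The reduction to the structural claim is sound: positive braid relations and cyclic rotation preserve both word length and strand number, and cyclic rotation preserves the closure, so any $\sigma_i^2\beta$ obtained from a minimal representative in this way is again minimal. The cases where $\sigma_{n-1}$ appears zero times (split off an unknotted strand) or once (destabilize) are also handled correctly, as is the observation that two cyclically consecutive $\sigma_{n-1}$'s with no intervening $\sigma_{n-2}$ can be commuted together.

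The gap you flag is genuine, not merely bookkeeping. When the word between two consecutive $\sigma_{n-1}$'s contains \emph{several} copies of $\sigma_{n-2}$ separated by letters of index $\le n-3$, you can commute the low-index letters past the outer $\sigma_{n-1}$'s, but you cannot in general commute the $\sigma_{n-2}$'s past each other or past an intervening $\sigma_{n-3}$; so neither the braid relation nor a forced $\sigma_{n-2}^2$ is immediately available. The remedy is to run the same dichotomy one level down: among all gaps between consecutive $\sigma_{n-1}$'s, pick one containing exactly one $\sigma_{n-2}$ if such exists (then commute and apply $\sigma_{n-1}\sigma_{n-2}\sigma_{n-1}=\sigma_{n-2}\sigma_{n-1}\sigma_{n-2}$, decreasing the $\sigma_{n-1}$-count); if every gap contains at least two $\sigma_{n-2}$'s, recurse into the subword on strands $1,\dots,n-1$ to either locate a square there or rearrange it so that some gap now has a single $\sigma_{n-2}$. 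Making this terminate cleanly requires a lexicographic induction (on the number of strands, then on the number of occurrences of the top generator), which is what van Buskirk's original argument carries out. Your sketch has the right skeleton, but the phrase ``should close the argument'' hides exactly this nested induction, and without it the proof is incomplete.
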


\begin{lemma}\label{Lem_zeroth_degree_bound}
Let $\beta$ be a positive $n$--braid, and let $e$ be the number of crossings of $\beta$. 
Then the degree of the zeroth coefficient polynomial satisfies
\[
\deg p^0_{L(\beta)}(v)\le n+e-\#L(\beta).
\]
\end{lemma}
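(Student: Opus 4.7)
The plan is to prove the bound by strong induction on $e$. The base case $e=0$ is immediate: $\beta$ is the identity $n$-braid, so $L(\beta)$ is the $n$-component unlink, and iterating the split formula \eqref{zeroth_skein_link} gives $p^0_{L(\beta)}(v)=(v^{-2}-1)^{n-1}$, whose $v$-degree is $0=n+0-n$.

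Before engaging with the inductive step proper, I would dispose of the case in which $L(\beta)$ is an unlink, even for $e\ge 1$: then $p^0_{L(\beta)}$ still has degree $0$, and since $\#L(\beta)\le n$ for any positive braid closure, the inequality $0\le n+e-\#L(\beta)$ is automatic. Assume now that $L(\beta)$ is not an unlink. Proposition \ref{Prop_positive_fulltwist} supplies a minimal positive braid representative $\beta'=\sigma_j^2\gamma$ of $L(\beta)$; since the Bennequin surface of a positive braid is a minimum-genus Seifert surface, its Euler characteristic $n-e$ is a link invariant across positive braid representations, so the minimal representative has $n'\le n$ strands and $e'=e-(n-n')\le e$ crossings. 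As $p^0$ depends only on the link, it therefore suffices to establish the bound for $\beta'$.

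Applying the skein relation \eqref{zeroth_skein} at one of the two $\sigma_j$'s in the $\sigma_j^2$ factor yields
\[
L_+=L(\beta'), \qquad L_-=L(\gamma), \qquad L_0=L(\sigma_j\gamma),
\]
so that $L_-$ and $L_0$ are closures of positive $n'$-braids with $e'-2$ and $e'-1$ crossings respectively, to which the inductive hypothesis applies. Combining the inductive bounds with $\#L_-=\#L_+$ (a crossing change preserves components) and $\#L_0=\#L_+\pm 1$ (according as $\delta=0$ or $1$), a direct computation shows that each of $v^2 p^0_{L_-}$ and $v^2 p^0_{L_0}$ has $v$-degree at most $n'+e'-\#L_+$, and the skein then gives $\deg p^0_{L_+}\le n'+e'-\#L(\beta)\le n+e-\#L(\beta)$.

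The main obstacle is that an arbitrary positive braid $\beta$ need not contain a sub-word of the form $\sigma_j^2$, and applying the skein at an ordinary positive crossing produces an $L_-$ that is no longer a positive braid closure, so the inductive hypothesis cannot be invoked on it. Proposition \ref{Prop_positive_fulltwist} is the tool that resolves this: it lets us replace $\beta$ by a minimal representative in which a $\sigma_j^2$ factor is guaranteed, and the Bennequin identity ensures that passing to this representative only strengthens the inequality we need.
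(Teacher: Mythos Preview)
Your proof is correct and follows essentially the same approach as the paper's: induction on the crossing number, handling unlinks directly, and for non-unlinks invoking Proposition~\ref{Prop_positive_fulltwist} to obtain a $\sigma_j^2$ factor so that the skein relation produces positive-braid closures with strictly fewer crossings. The only bookkeeping difference is that the paper stabilizes the minimal representative back to $n$ strands (so the two braids in the skein have exactly $e-2$ and $e-1$ crossings on $n$ strands), whereas you keep the minimal representative on $n'\le n$ strands and close the gap at the end via the Bennequin identity $n'+e'=n+e-2(n-n')\le n+e$; both are equally valid.
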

\begin{proof}
We prove this lemma by induction on $e$. For an unlink $U$, 
\[
p^0_U(v)=(v^{-2}-1)^{\#U-1},
\] 
so $\deg p^0_U(v)=0$.

Assume that $L(\beta)$ is not an unlink. By Proposition \ref{Prop_positive_fulltwist}, we can write $L(\beta)=L(\sigma_i^2 \beta')$ for some $i$, where $\beta'$ is a positive $n$--braid with $e-2$ crossings. (If necessary, we may increase the number of crossings to realize $\beta'$ as an $n$--braid.)

Applying the skein relation \eqref{zeroth_skein}, we get
\[
p^0_{L(\beta)}(v)=p^0_{L(\sigma_i^2\beta')}(v)=v^2p^0_{L(\beta')}(v)+\left\{
\begin{array}{ll}
v^2p^0_{L(\sigma_i\beta')}(v) & (\delta=0),\\
0 & (\delta=1).
\end{array}
\right.
\]
Note that $\#L(\beta')=\#L(\beta)$, and $\#L(\sigma_i\beta')=\#L(\beta)+1$ if $\delta=0$.
By the assumption of induction, we have
\begin{align*}
\deg v^2p^0_{L(\beta')}(v)&\le 2+(n+e-2-\#L(\beta'))\\
&=n+e-\#L(\beta),
\end{align*} 
and if $\delta=0$,
\begin{align*}
\deg v^2p^0_{L(\sigma_i\beta')}(v)&\le 2+(n+e-1-\#L(\sigma_i\beta'))\\
&=n+e-\#L(\beta).
\end{align*}
Therefore, the claim follows.
\end{proof}

\begin{definition}
A positive $n$--braid $\beta$  is said to be {\it sharp\/} if 
\[
\deg p^0_L(\beta)(v)=n+e-\#L(\beta),
\]
where $e$ is the number of crossings of $\beta$.
\end{definition}

We now state several lemmas concerning the notion of sharpness.

\begin{lemma}\label{Lem_nonsharp_1}
If positive braid $\sigma_i^2\beta$ is sharp, then at least one of $\beta$ and $\sigma_i\beta$ is sharp.
\end{lemma}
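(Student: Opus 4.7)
The plan is to combine the skein-relation identity already derived in the proof of Lemma \ref{Lem_zeroth_degree_bound} with a degree comparison. Let $e$ be the number of crossings of $\sigma_i^2\beta$, and set $N := n+e-\#L(\sigma_i^2\beta)$; sharpness of $\sigma_i^2\beta$ means exactly that $\deg p^0_{L(\sigma_i^2\beta)}(v)=N$. Note that $\#L(\beta)=\#L(\sigma_i^2\beta)$ (adding a full twist does not change the underlying permutation), while the quantity $\delta$ for the chosen $\sigma_i^2$--crossing satisfies $\#L(\sigma_i\beta)=\#L(\sigma_i^2\beta)+1$ precisely when $\delta=0$.

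First I would invoke the identity recorded inside the proof of Lemma \ref{Lem_zeroth_degree_bound},
$$p^0_{L(\sigma_i^2\beta)}(v)=v^2 p^0_{L(\beta)}(v)+\begin{cases} v^2 p^0_{L(\sigma_i\beta)}(v) & (\delta=0),\\ 0 & (\delta=1).\end{cases}$$
In the case $\delta=1$ the identity collapses to $p^0_{L(\sigma_i^2\beta)}(v)=v^2 p^0_{L(\beta)}(v)$, so sharpness of $\sigma_i^2\beta$ immediately gives $\deg p^0_{L(\beta)}(v)=N-2=n+(e-2)-\#L(\beta)$, which is the sharpness condition for $\beta$. In the case $\delta=0$ I would apply Lemma \ref{Lem_zeroth_degree_bound} to each summand on the right: $\deg v^2 p^0_{L(\beta)}(v)\le 2+n+(e-2)-\#L(\beta)=N$ and $\deg v^2 p^0_{L(\sigma_i\beta)}(v)\le 2+n+(e-1)-\#L(\sigma_i\beta)=N$, where the drop $e\mapsto e-1$ is exactly compensated by the jump $\#L(\beta)\mapsto \#L(\beta)+1$ forced by $\delta=0$. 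Since the left-hand side has degree $N$, the two upper bounds cannot both be strict, so at least one of $\beta$ and $\sigma_i\beta$ attains its bound, i.e., is sharp.

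I do not expect a genuine obstacle here: the whole statement is a degree-bookkeeping corollary of the identity already used to prove Lemma \ref{Lem_zeroth_degree_bound}. The only point worth emphasizing in writing is the algebraic coincidence that, in the $\delta=0$ branch, the degree bounds for the two summands coincide with $N$, which is precisely what allows one to transfer sharpness from $\sigma_i^2\beta$ to at least one of its two skein descendants.
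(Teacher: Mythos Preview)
Your proposal is correct and is exactly the approach the paper takes: the paper's proof reads in full ``The claim follows immediately from the proof of Lemma~\ref{Lem_zeroth_degree_bound},'' and you have simply written out that immediate deduction, splitting on $\delta$ and comparing the degree bounds for the two skein descendants against the sharpness value $N$.
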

\begin{proof}
The claim follows immediately from the proof of Lemma \ref{Lem_zeroth_degree_bound}.
\end{proof}

\begin{lemma}\label{Lem_nonsharp_2}
If a positive braid $\beta$ is not a minimal positive braid, then $\beta$ is not sharp.
\end{lemma}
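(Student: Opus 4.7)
The plan is to exploit the classical fact that for any positive braid representative of a link $L$, the difference (number of crossings) $-$ (number of strands) equals $-\chi(L)$ and is therefore a link invariant. Indeed, applying Seifert's algorithm to the closure of a positive $n$-braid with $e$ crossings produces a Seifert surface consisting of $n$ disks joined by $e$ bands, hence with Euler characteristic $n-e$. By Bennequin's theorem for positive braids (equivalently Stallings' theorem, since positive-braid closures are fibered and this surface is the fiber), this Seifert surface realizes the minimum genus. Consequently, if $\beta$ (with $n$ strands and $e$ crossings) and $\beta'$ (with $n'$ strands and $e'$ crossings) are positive braid representatives of the same link $L$, then
\[
e - n = e' - n' = -\chi(L).
\]

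Now suppose $\beta$ is a non-minimal positive $n$-braid representing $L$, and let $\beta'$ be a minimal positive representative of $L$ with $n' < n$ strands and $e'$ crossings. The invariance above gives $n' + e' = n + e - 2(n-n') \le n + e - 2$. Since $\#L(\beta') = \#L$, applying Lemma \ref{Lem_zeroth_degree_bound} to $\beta'$ yields
\[
\deg p^0_{L(\beta)}(v) = \deg p^0_{L(\beta')}(v) \le n' + e' - \#L(\beta) \le n + e - \#L(\beta) - 2,
\]
which is strictly less than $n + e - \#L(\beta)$. Hence $\beta$ is not sharp.

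I anticipate no significant obstacle in this proof. The only nontrivial ingredient is the minimality of the Seifert-algorithm surface for a positive braid closure, and this is a standard result (Bennequin/Stallings) that the author can simply cite. Everything else is an immediate application of Lemma \ref{Lem_zeroth_degree_bound}.
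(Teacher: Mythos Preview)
Your proof is correct and follows essentially the same approach as the paper: both arguments use the fact that $e-n=-\chi(L)$ is the same for all positive braid representatives of $L$ (via Stallings/Bennequin), then apply Lemma~\ref{Lem_zeroth_degree_bound} to a minimal positive representative $\beta'$ with $n'<n$ strands to conclude $\deg p^0_{L(\beta)}(v)\le n'+e'-\#L(\beta)<n+e-\#L(\beta)$. The only cosmetic difference is that you track $n'+e'=n+e-2(n-n')$ while the paper writes the crossing numbers directly as $n-\chi(L)$ and $n'-\chi(L)$.
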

\begin{proof}
Let $\beta$ be a positive $n$--braid that is not minimal, and let $\beta'$ be a minimal positive braid representative of $L(\beta)$ with $n'$ strands ($n'<n$). The number of crossings of $\beta$ and $\beta'$ are given by
\[
n-\chi(L(\beta))\ \text{and}\ n'-\chi(L(\beta)),
\]
respectively \cite{Sta76}. Here $\chi(L)$ is the maximal Euler characteristic among all compact, connected, oriented surfaces whose boundary is a link $L$.

By Lemma \ref{Lem_zeroth_degree_bound}, 
\begin{align*}
\deg p^0_{L(\beta)}(v) &\le n'+n'-\chi(L(\beta))-\#L(\beta)\\
&<n+n-\chi(L(\beta))-\#L(\beta).
\end{align*}
This implies that $\beta$ is not sharp.
\end{proof}

\begin{lemma}\label{Lem_nonsharp_3}
Let $L=k_1 \cup k_2$ be a two--component link represented as the closure of a positive braid $\beta$. Suppose that each component knot $k_1$ and $k_2$ can also be represented as the closure of positive braids $\beta_1$ and $\beta_2$, respectively.
If $\beta$ is sharp, then both $\beta_1$ and $\beta_2$ are sharp.
\end{lemma}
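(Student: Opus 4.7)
The plan is to combine the product formula \eqref{zeroth_skein_link} for the zeroth coefficient polynomial of a two--component link with the degree bound from Lemma \ref{Lem_zeroth_degree_bound}. First, I would realize $\beta_1$ and $\beta_2$ concretely as the positive braids obtained by restricting $\beta$ to the strands belonging to $k_1$ and $k_2$, respectively: if $\beta$ has $n$ strands and $e$ crossings, and the restriction $\beta_i$ has $n_i'$ strands and $e_i'$ crossings, then $n=n_1'+n_2'$. Moreover, because every crossing of $\beta$ is positive, the number of crossings of $\beta$ occurring between a strand of $k_1$ and one of $k_2$ equals $2\lk(k_1,k_2)$, so $e=e_1'+e_2'+2\lk(k_1,k_2)$.

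Next, I would apply \eqref{zeroth_skein_link} and take the $v$--degree of both sides, noting that $\deg(v^{-2}-1)=0$, to obtain
\[
\deg p^0_L(v)=2\lk(k_1,k_2)+\deg p^0_{k_1}(v)+\deg p^0_{k_2}(v).
\]
Sharpness of $\beta$ gives $\deg p^0_L(v)=n+e-2$; substituting the crossing and strand identities above rewrites this as
\[
\deg p^0_{k_1}(v)+\deg p^0_{k_2}(v)=(n_1'+e_1'-1)+(n_2'+e_2'-1).
\]
On the other hand, applying Lemma \ref{Lem_zeroth_degree_bound} to each $\beta_i$ individually yields $\deg p^0_{k_i}(v)\le n_i'+e_i'-1$. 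Since the two upper bounds sum to the equality just derived, each must be attained, which is exactly the statement that $\beta_1$ and $\beta_2$ are sharp.

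The main step requiring care is the combinatorial bookkeeping in the first paragraph: one must correctly split the $e$ crossings of $\beta$ into intra--component crossings $e_1',e_2'$ and inter--component crossings, and recognize that the latter count equals $2\lk(k_1,k_2)$ precisely because $\beta$ is positive. Once this is in place, the degree comparison is immediate from \eqref{zeroth_skein_link} and the upper bound of Lemma \ref{Lem_zeroth_degree_bound}, with no further input needed.
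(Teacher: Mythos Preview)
Your argument is correct and follows the same route as the paper: compute $\deg p^0_L$ via the product formula \eqref{zeroth_skein_link}, compare with the sharpness value $n+e-2$, and conclude from Lemma \ref{Lem_zeroth_degree_bound} that each summand must attain its bound. Your bookkeeping is in fact slightly more careful than the paper's---you correctly record the inter--component crossing count as $2\lk(k_1,k_2)$ (the paper writes $\lk(k_1,k_2)$, a typo that its subsequent degree computation silently corrects), and you make explicit that the $\beta_i$ in play are the sub-braids of $\beta$ on the strands of $k_i$, which is what justifies $n=n_1'+n_2'$.
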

\begin{proof}
Let $n,n_1,n_2$ be the numbers of strands, and $e,e_1,e_2$ be the numbers of crossings of $\beta$, $\beta_1$ and $\beta_2$, respectively. Then, 
\[
n=n_1+n_2,\ e=e_1+e_2+\lk(k_1,k_2).
\]
By \eqref{zeroth_skein_link} and Lemma \ref{Lem_zeroth_degree_bound}, we have
\begin{align*}
\deg p_L^0 &\le 2\cdot \lk(k_1,k_2)+(n_1+e_1-1)+(n_2+e_2-1)\\
&=n+e-2.
\end{align*}
The equality holds if and only if $\beta_1$ and $\beta_2$ are sharp.
Since $\beta$ is sharp, the equality holds, and the result follows. 
\end{proof}

\subsection{Non--braid positivity for $K_n$}
We are ready to prove that $K_n$ is not braid positive when $n$ is even.

\begin{lemma}\label{lem_Kn_topterm}
For $n\ge 2$, the top term of $p^0_{K_n}(v)$ is $(-1)^nv^{3n^2+3n}$.
\end{lemma}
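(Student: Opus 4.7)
The plan is to compute $p^0_{K_n}(v)$ by applying the skein relation \eqref{zeroth_skein_NtoP} iteratively at each of the $n-1$ negative crossings $\sigma_1^{-1},\ldots,\sigma_{n-1}^{-1}$ in $\beta_n$. At each application, a knot--$p^0$ term splits into two: one in which the crossing is flipped to $\sigma_j$ (carrying a factor $v^{-2}$) and one in which it is smoothed (carrying a factor $(1-v^{-2})v^{2\lk}$, with the resulting two--component link's $p^0$ factored as $p^0_{k_1}p^0_{k_2}$). After iterating through all $n-1$ negative crossings, $p^0_{K_n}(v)$ becomes an explicit $v$--polynomial combination of products of zeroth coefficient polynomials of positive braid closures, each with degree controlled by Lemma \ref{Lem_zeroth_degree_bound}.

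I expect the unique top--degree contribution to come from the resolution in which every negative crossing is smoothed. The reason is that each flip contributes a degree--lowering $v^{-2}$, whereas each smoothing contributes $v^{2\lk}$, with $\lk$ potentially large due to the heavy twisting produced by $X_n^3$ and the closing word of $\beta_n$. The core calculation is then to identify the positive braid link obtained from all smoothings, compute the pairwise linking numbers from the braid word, and apply Lemma \ref{Lem_zeroth_degree_bound} to determine the top term of each component's $p^0$, possibly by induction on $n$ using the top term of $p^0_{K_m}$ for $m<n$ (which requires treating odd $m$ as well, not just the even values covered by Theorem \ref{thm_main}). Tracking the signs through the $(1-v^{-2})$ factors and the top coefficients of the individual components' zeroth polynomials should yield the claimed $(-1)^n v^{3n^2+3n}$.

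To rule out equal--degree contributions from the other $2^{n-1}-1$ resolutions, I would apply Lemmas \ref{Lem_nonsharp_1}--\ref{Lem_nonsharp_3} to show that any positive braid obtained by flipping at least one negative crossing is not sharp, forcing a strict degree loss. The main obstacle is the combinatorial bookkeeping for the all--smoothings link: explicitly identifying its components, their braid representatives, and pairwise linking numbers in terms of $n$, and setting up the induction cleanly. Since $\beta_n$ lives in the $2n$--strand braid group while $\beta_{n-1}$ lives in the $2(n-1)$--strand braid group, the inductive step is not a single skein move but a structural recognition; one must detect cable--like pieces inside the smoothed link and relate its components to simpler knots in the $K_m$ family (or to torus knots) via Markov--type moves, which will require a careful braid--theoretic argument.
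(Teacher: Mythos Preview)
Your overall strategy—resolve the negative crossings via the skein relation \eqref{zeroth_skein_NtoP}, bound the degree of every resulting term by Lemma~\ref{Lem_zeroth_degree_bound}, eliminate competitors by non-sharpness, and induct on $n$—is exactly the paper's approach. But two aspects of your framing are off and would lead you astray if you tried to execute them as written.

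First, the skein tree is linear, not binary: there are $n$ terms, not $2^{n-1}$. Applying \eqref{zeroth_skein_NtoP} successively along the sequence of negative crossings, the paper obtains
\[
p^0_{K_n}(v)=\sum_{k=1}^{n-1} v^{-2(k-1)}(1-v^{-2})\,v^{2(3(n-k)k+k)}\,p^0_{(T_{2,3})_{k,3k+1}}(v)\,p^0_{K_{n-k}}(v)\;+\;v^{-2(n-1)}p^0_{K_n^+}(v),
\]
where the $k$-th smoothing splits off the $(k,3k+1)$--cable of the trefoil on one side and exactly $K_{n-k}$ on the other. Recognising $K_{n-k}$ here is the structural step you flag as the ``main obstacle''; it is what makes the induction immediate rather than requiring you to analyse an all-smoothed multi-component link from scratch.

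Second, your heuristic ``each flip contributes a degree-lowering $v^{-2}$, so the all-smoothed term dominates'' is wrong as stated: every one of the $n$ terms above has the \emph{same} degree bound $3n^2+3n$, because the $v^{-2(k-1)}$ is exactly compensated by the growth of the linking number and of the cable. The reason only the $k=1$ term contributes to the top degree is that the positive braid $X_k^3\cdot[1,\dots,k-1]$ representing $(T_{2,3})_{k,3k+1}$ is \emph{not} sharp for $k\ge 2$, and neither is the positive braid representing $K_n^+$. Proving these two non-sharpness statements (the paper's Claims~\ref{claim_cable_nonsharp} and~\ref{claim_K+_nonsharp}) is the technical heart of the argument; each requires its own auxiliary skein tree and repeated use of Lemmas~\ref{Lem_nonsharp_1}--\ref{Lem_nonsharp_3} to exhibit non-minimal positive braid representatives at every leaf. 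Your proposal mentions non-sharpness only generically (``any positive braid obtained by flipping at least one negative crossing''), which is neither the correct target nor how the argument actually runs.
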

\begin{proof}
We prove this by induction on $n$. Using the Sage Mathematics Software System \cite{Sage}, one can confirm that the top term of $p^0_{K_2}(v)$ is $v^{18}$.

Throughout the following, we identify each braid with its closure diagram in the skein tree.

\begin{figure}
\centering
\includegraphics[scale=0.07]{}
\caption{The skein tree for the knot $K_n$. Consider the crossing change and the smoothing at the crossing indicated by the dashed circle.}
\label{K_n_main_skein}
\end{figure}

Assume that $n\ge 3$. From the skein tree shown in Figure \ref{K_n_main_skein} and equation \eqref{zeroth_skein_NtoP}, we obtain
\begin{align*}
p^0_{K_n}(v) =& (1-v^{-2})v^{2(3(n-1)\cdot 1+1)}p^0_{T_{2,3}}(v)p^0_{K_{n-1}}(v)\\
&+(1-v^{-2})v^{-2}\cdot v^{2(3(n-2)\cdot 2+2)}p^0_{(T_{2,3})_{2,7}}(v)p^0_{K_{n-2}}(v)\\
&+\cdots\\
&+(1-v^{-2})v^{-2(n-2)}\cdot v^{2(3\cdot1\cdot(n-1)+n-1}p^0_{(T_{2,3})_{n-1,3n-2}}(v)p^0_{K_1}(v)\\
&+v^{-2(n-1)}p_{K_n^+}(v)\\
=& \sum_{k=1}^{n-1}v^{-2(k-1)}(1-v^{-2})v^{2(3(n-k)\cdot k+k)}p^0_{(T_{2,3})_{k,3k+1}}(v)p^0_{K_{n-k}}(v)\\
&+v^{-2(n-1)}p_{K_n^+}(v),
\end{align*}
where $(T_{2,3})_{k,3k-1}$ denotes the $(k,3k-1)$--cable of the right-handed trefoil, $K_n^+$ is the braid positive knot shown in the bottom of Figure \ref{K_n_main_skein}.

By the induction assumption and the fact $\deg p^0_{K_1}(v)=\deg p^0_{T_{2,5}}(v)=6$, we have $\deg p^0_{K_i}=3i^2+3i$ for $1\le i\le n-1$. Since $(T_{2,3})_{k,3k+1}$ is represented by the closure of the positive braid $X_k^3\cdot[1,2,\ldots,k-1]$, Lemma \ref{Lem_zeroth_degree_bound} gives
\begin{align*}
&\deg \left(v^{-2(k-1)}(1-v^{-2})v^{2(3(n-k)\cdot k+k)}p^0_{(T_{2,3})_{k,3k+1}}(v)p^0_{K_{n-k}}(v) \right)\\
&\le -2(k-1)+2(3(n-k) k+k)+(2k+3k^2+(k-1)-1)+3(n-k)^2+3(n-k)\\
&=3n^2+3n.
\end{align*}
Equality holds if and only if the positive braid $X_k^3\cdot[1,2,\ldots,k-1]$ is sharp.

\begin{claim}\label{claim_cable_nonsharp}
For $k\ge 2$, the positive braid $X_k^3\cdot[1,2,\ldots,k-1]$ is not sharp. 
\end{claim}
\begin{proof}
We consider the skein tree
\[
\begin{array}{ccccccccccccc}
D_0 & \rightarrow & D_1 & \rightarrow & \cdots & \rightarrow & D_i & \rightarrow & D_{i+1} & \rightarrow & \cdots & \rightarrow & D_k,\\
\downarrow & & \downarrow & & &  & \downarrow & & \downarrow & & \\
D_0^s & & D_1^s & & & & D_i^s & &D_{i+1}^s
\end{array}
\]
where $D_0$ is the braid $X_k^3 \cdot [1,2,\ldots,k-1]$ and each $D_i\ (i=1,\ldots,k)$ and $D_i^s\ (i=0,\ldots,k-1)$ are as illustrated in Figures \ref{cable_skein_i} and \ref{cable_skein_i_smooth}. The right arrows correspond to crossing changes, and the down arrows correspond to smoothings. Note that all links in the skein tree are braid positive links.

$D_k$ contains exactly one occurrence of the generator $\sigma_{2k-1}$, and is hence a non-minimal positive braid. Each $D_i^s$ ($0 \le i \le k-1$) is also non-minimal as shown in Figures \ref{cable_skein_i_smooth_nonsharp} and \ref{cable_skein_k-1_smooth_nonsharp}. Thus, by Lemma \ref{Lem_nonsharp_1} and \ref{Lem_nonsharp_2}, the claim follows.
\end{proof}

\begin{figure}
\centering
\includegraphics[scale=0.12]{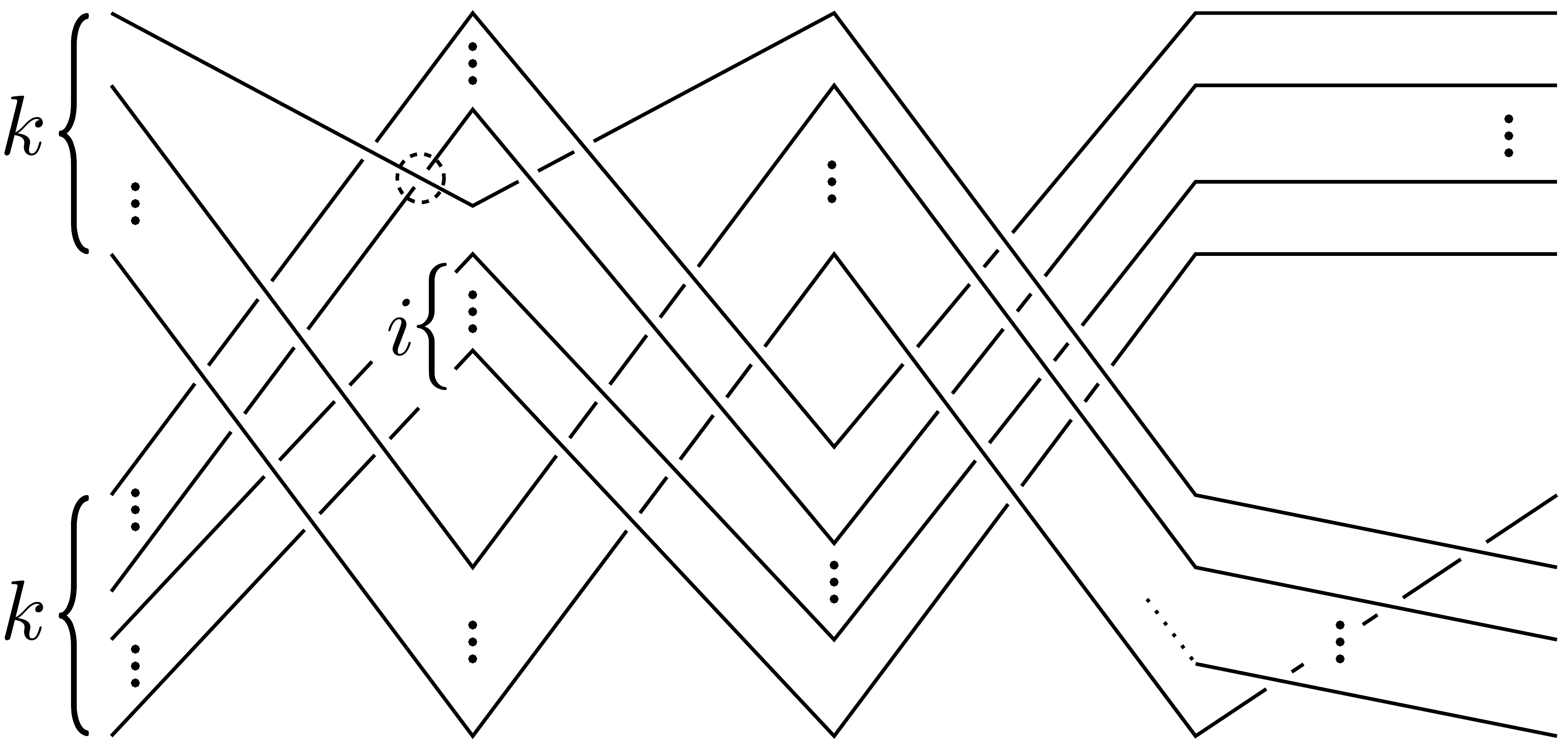}
\caption{The diagram $D_i\ (0\le i\le k)$. Performing a crossing change at the crossing indicated by the dashed circle, followed by a Reidemeister II move, results in the diagram $D_{i+1}$. Alternatively, smoothing the crossing produces the diagram $D_i^s$.}\label{cable_skein_i}
\end{figure}

\begin{figure}
\centering
\includegraphics[scale=0.12]{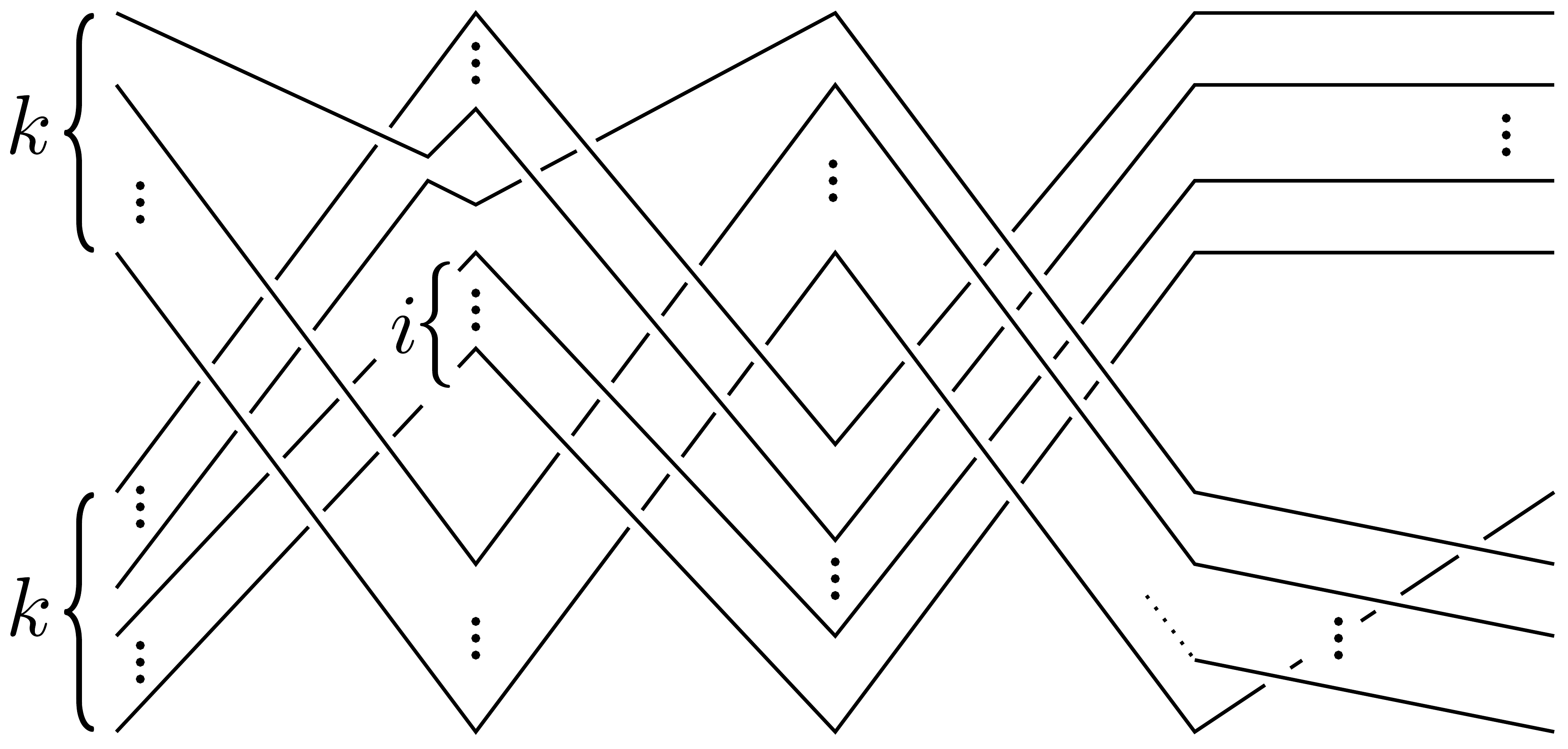}
\caption{The diagram $D_i^s\ (0\le i\le k-1)$.}\label{cable_skein_i_smooth}
\end{figure}

\begin{figure}
\centering
\includegraphics[scale=0.1]{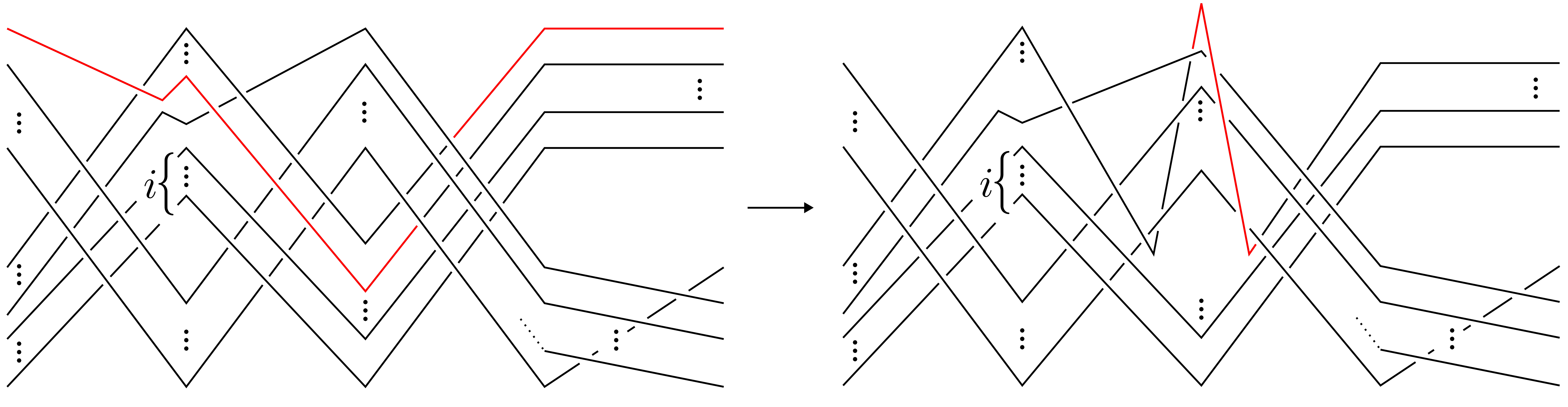}
\caption{The positive braid $D^s_i\ (0\le i \le k-2)$ is not minimal.}
\label{cable_skein_i_smooth_nonsharp}
\end{figure}

\begin{figure}
\centering
\includegraphics[scale=0.1]{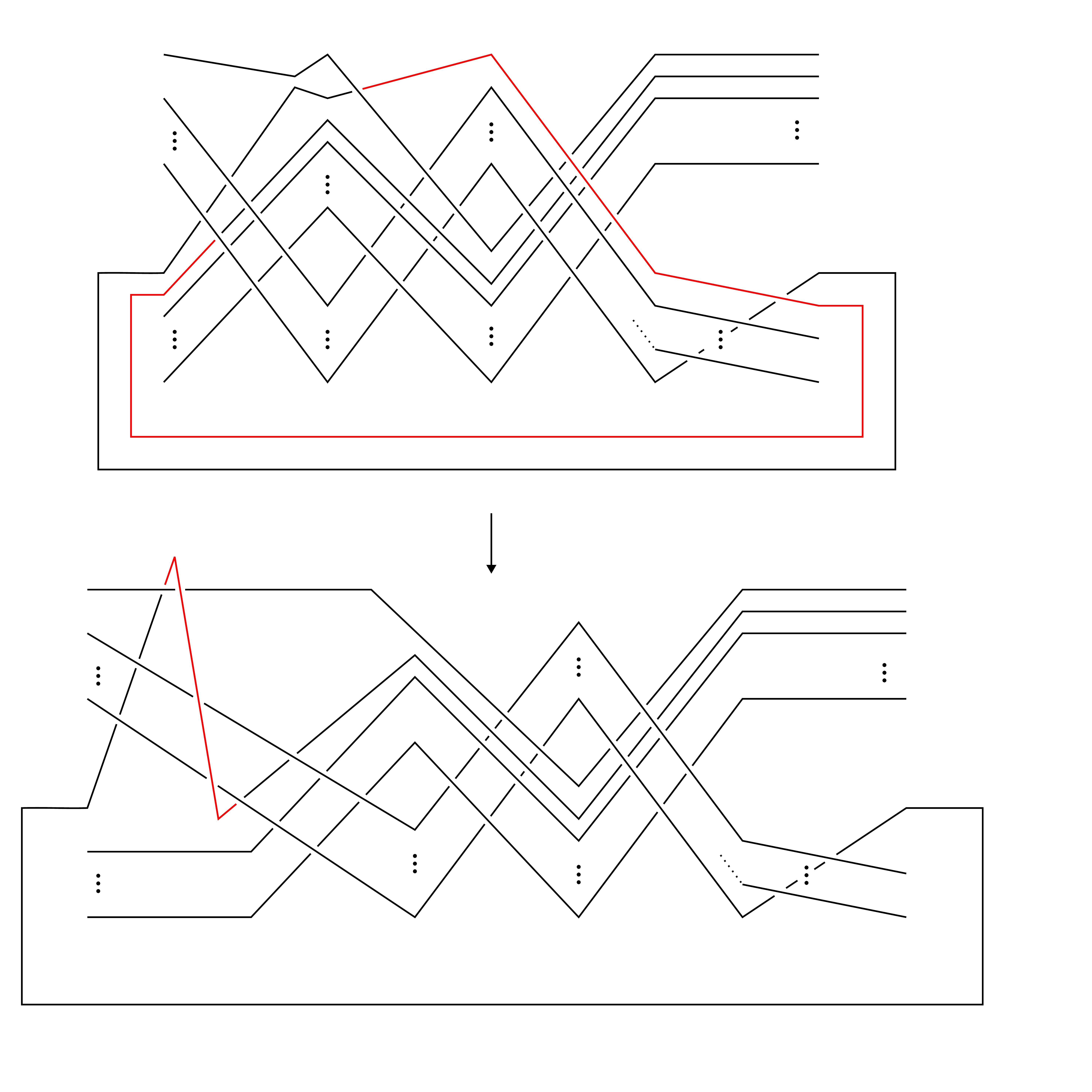}
\caption{The positive braid $D^s_{k-1}$ is not minimal.}
\label{cable_skein_k-1_smooth_nonsharp}
\end{figure}


On the other hand, $K_n^+$ is represented by the closure of the positive braid
\[
X_n^3\cdot [1,2,\ldots,n-1,n,n-1,n-2,\ldots,2,1,1,2,3,\ldots,n].
\]
Lemma \ref{Lem_zeroth_degree_bound} gives 
\begin{align*}
\deg v^{-2(n-1)}p^0_{K_n^+}(v)&\le -2(n-1)+2n+(3n^2+n-1+2n)-1\\
&=3n^2+3n.
\end{align*}
Equality holds if and only if the positive braid 
\[
X_n^3\cdot [1,2,\ldots,n-1,n,n-1,n-2,\ldots,2,1,1,2,3,\ldots,n]
\]
is sharp again.

\begin{claim}\label{claim_K+_nonsharp}
For $n\ge 3$, the positive braid
\[
X_n^3\cdot [1,2,\ldots,n-1,n,n-1,n-2,\ldots,2,1,1,2,3,\ldots,n]
\]
is not sharp.
\end{claim}

\begin{proof}
We consider the skein tree
\[
\begin{array}{ccccccccccccc}
E_0 & \rightarrow & E_1 & \rightarrow & \cdots & \rightarrow & E_i & \rightarrow & E_{i+1} & \rightarrow & \cdots & \rightarrow & E_n,\\
\downarrow & & \downarrow & & &  & \downarrow & & \downarrow & & \\
E_0^s & & E_1^s & & & & E_i^s & &E_{i+1}^s
\end{array}
\]
where $E_0$ is the braid described in the claim, and $E_n$ is $X_n^3 \cdot [1,2,\ldots,n-1]$, which is not sharp by Claim \ref{claim_cable_nonsharp}. The diagrams $E_i$ and $E_i^s$ are illustrated in Figures \ref{K_n_plus_Ei} and \ref{K_n_plus_Es}.

\begin{figure}
\centering
\includegraphics[scale=0.12]{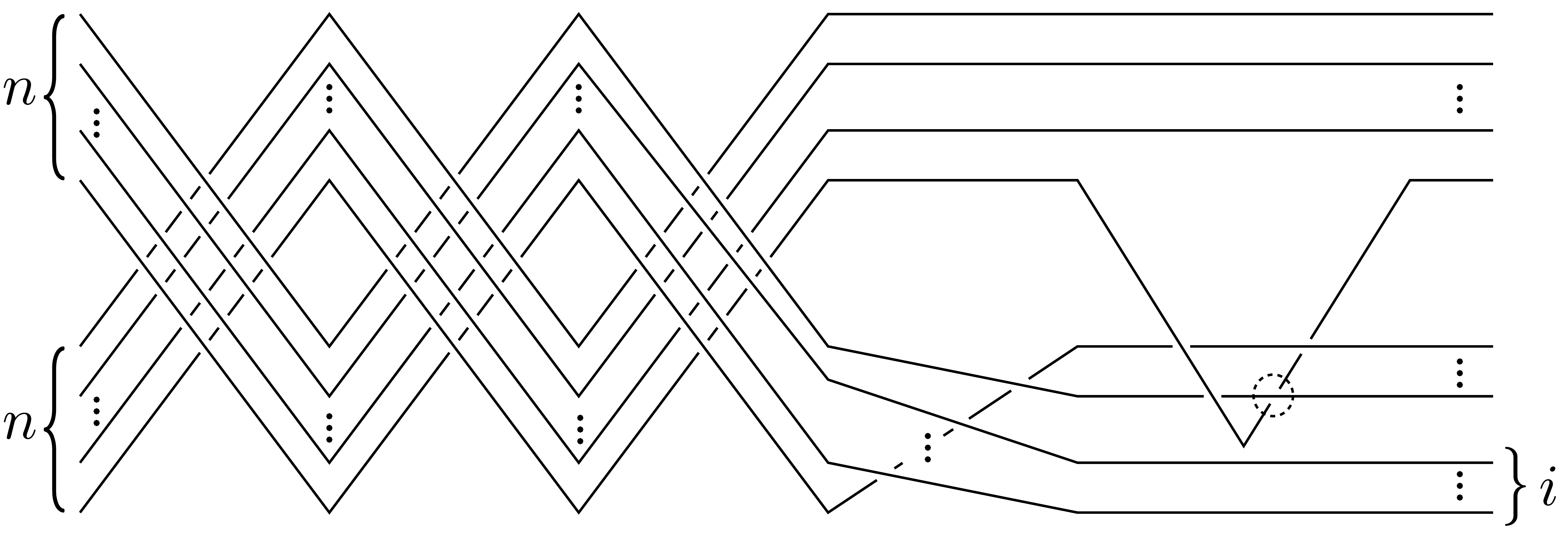}
\caption{The diagram $E_i\ (0\le i\le n)$. Performing a crossing change at the crossing indicated by the dashed circle, results in the diagram $E_{i+1}$. Alternatively, smoothing the crossing produces the diagram $E_i^s$.}\label{K_n_plus_Ei}
\end{figure}

\begin{figure}
\centering
\includegraphics[scale=0.1]{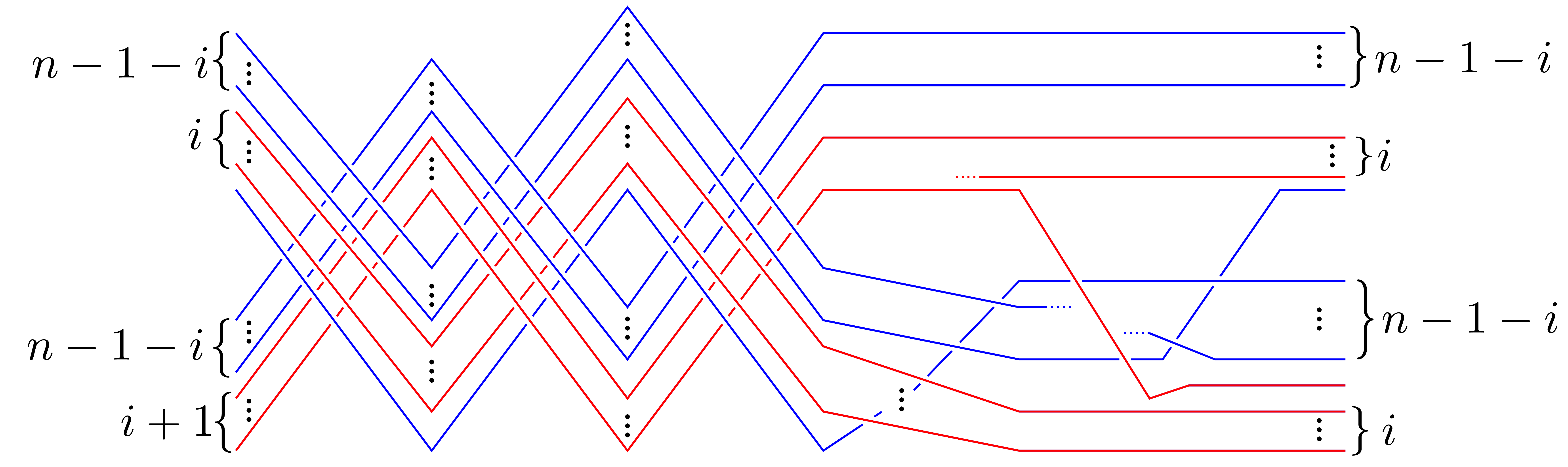}
\caption{The diagram $E_i^s\ (0\le i\le n-1)$ represents the two--component link $k_1\cup k_2$. $k_1$ is represented by the positive braid $\gamma^1_i$ (red), and $k_2$ by $\gamma^2_i$ (blue).}
\label{K_n_plus_Es}
\end{figure}

As shown in Figure \ref{K_n_plus_Es}, the diagram $E_i^s$ consists of two components $k_1$ and $k_2$, represented by the positive braids $\gamma^1_i$ and $\gamma^2_i$ respectively. Figure \ref{gamma1} shows that $\gamma^1_i$ $(1\le i\le n-1)$ is not minimal, and hence non-sharp by Lemma \ref{Lem_nonsharp_2}. Then, Lemma \ref{Lem_nonsharp_3} implies that $E_i^s$ is not sharp for $1\le i \le n-1$.

\begin{figure}
\centering
\includegraphics[scale=0.1]{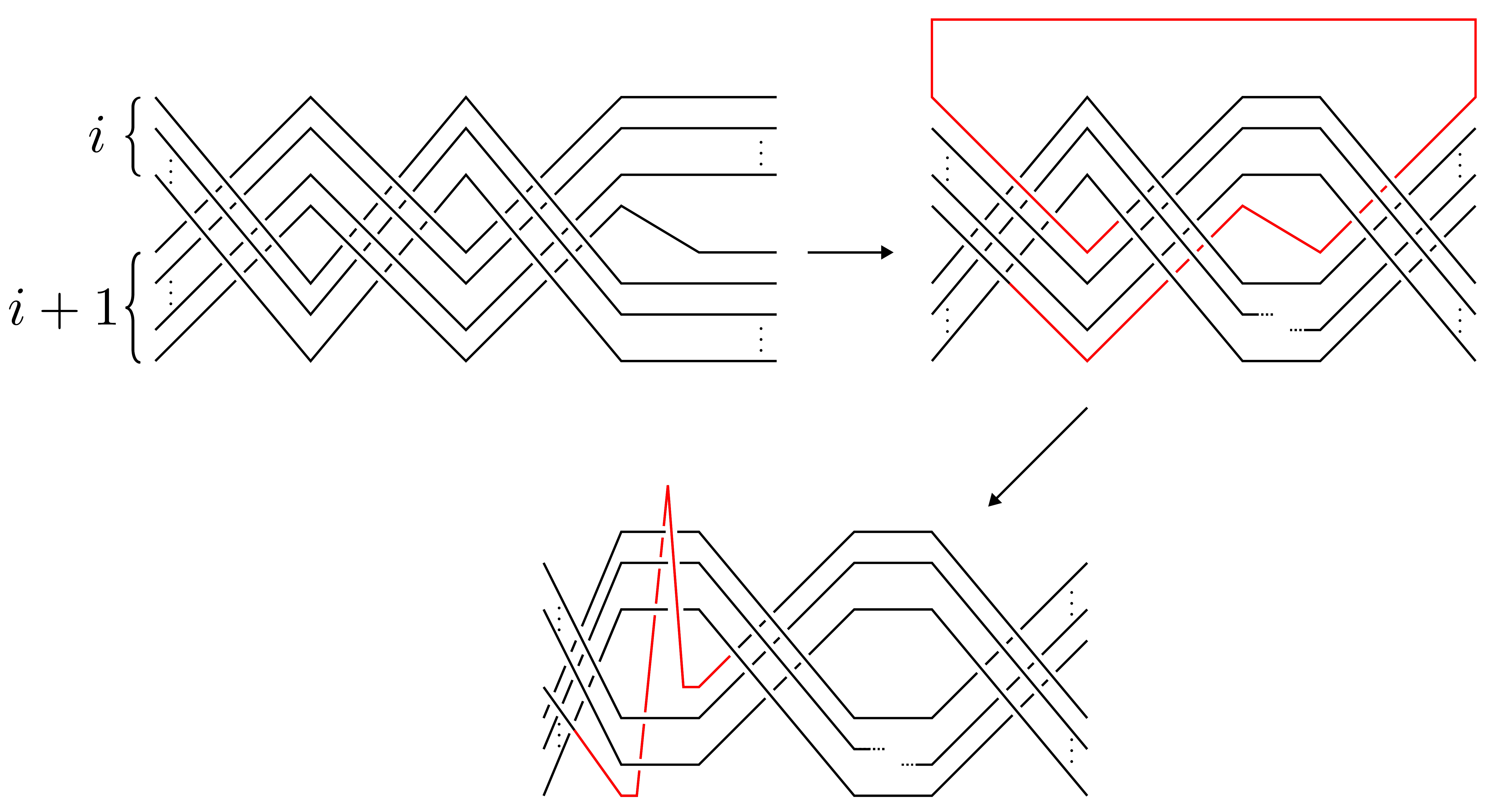}
\caption{The positive braid $\gamma^1_i\ (1\le i\le n-1)$ is not minimal. The first deformation is a conjugation that moves the ``X-shaped" part on the left side of the braid to the right.}
\label{gamma1}
\end{figure}

For $i = 0$, we observe that $\gamma^1_0$ is a $1$--braid (thus sharp), so we analyze $\gamma^2_0$. We perform a crossing change and smoothing at the crossing marked in Figure \ref{gamma2}. The resulting diagrams (Figures \ref{gamma2_c} and \ref{gamma2_s}) demonstrate that $\gamma^2_0$ is not sharp, and hence $E_0^s$ is not sharp.
\end{proof}

\begin{figure}
\centering
\includegraphics[scale=0.13]{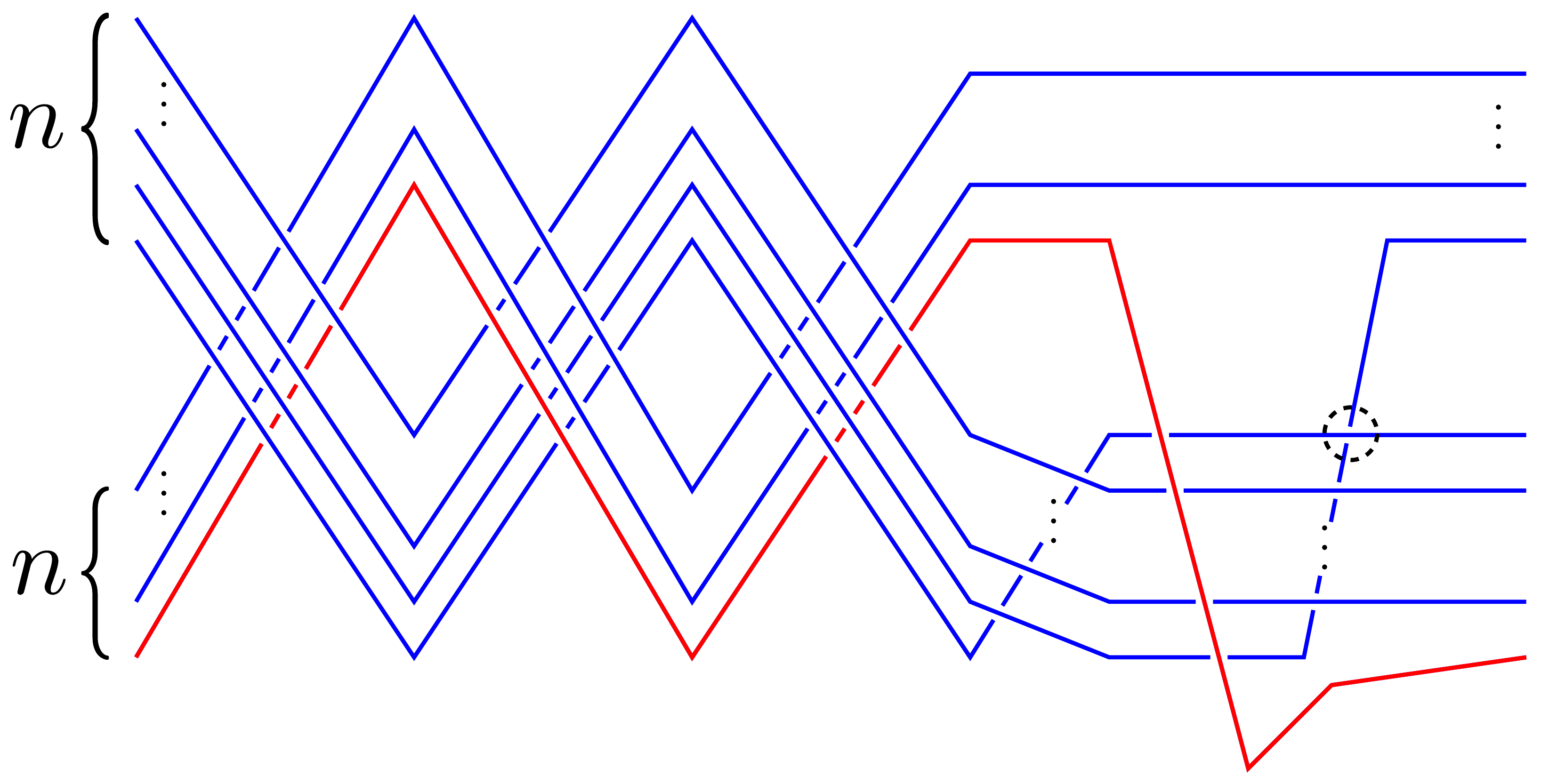}
\caption{The diagram $E_0^s$. The red line represents $\gamma^1_0$, and the blue line represents $\gamma^2_0$. The crossing change at the crossing indicated by the dashed circle changes $\gamma^2_0$ into the diagram as in Figure \ref{gamma2_c}. Alternatively, the smoothing yields the diagram as in Figure \ref{gamma2_s}.}
\label{gamma2}
\end{figure}

\begin{figure}
\centering
\includegraphics[scale=0.1]{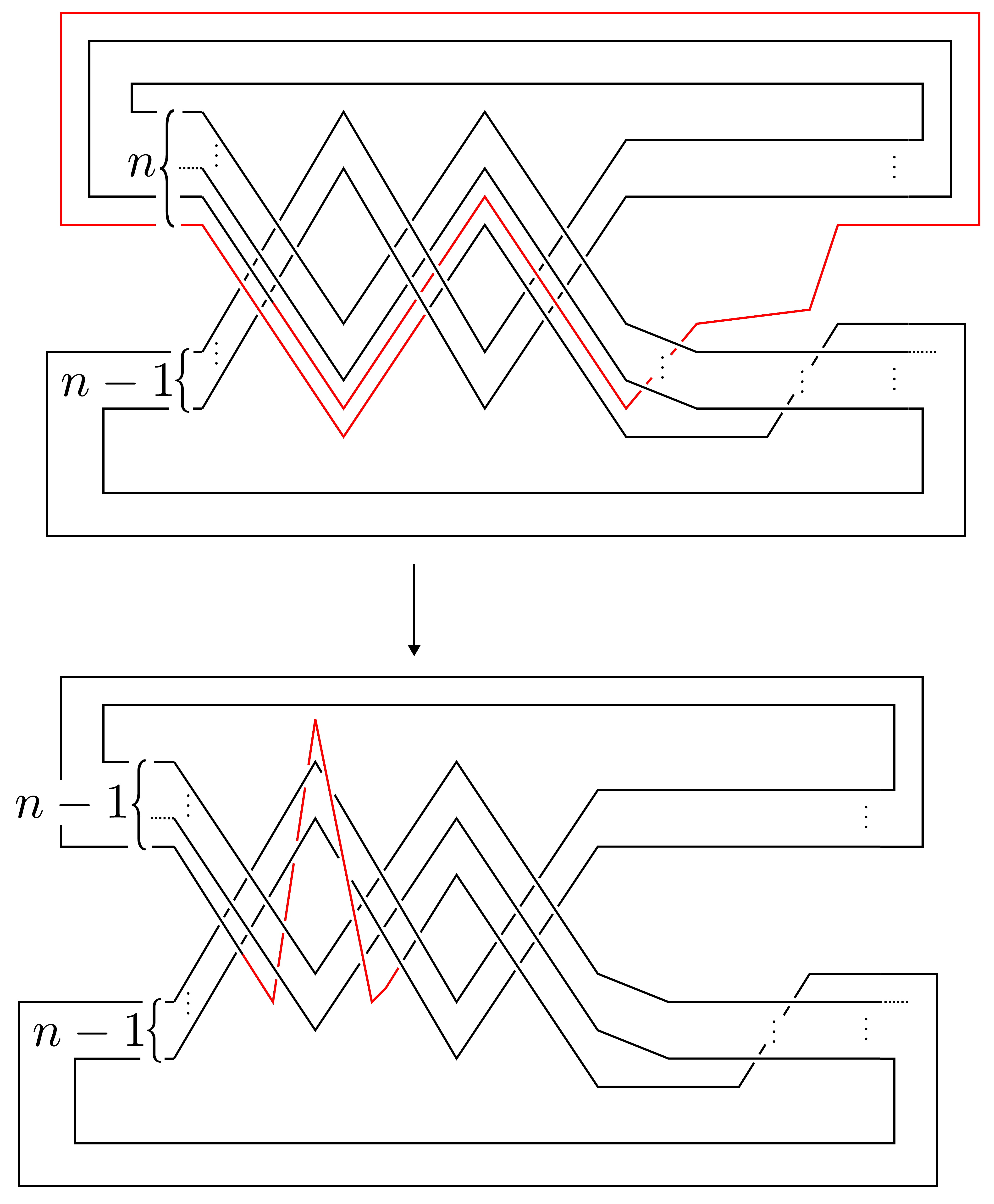}
\caption{The diagram obtained by the crossing change of $\gamma^2_0$. As shown, the corresponding positive braid is not minimal, and hence it is not sharp.}
\label{gamma2_c}
\end{figure}

\begin{figure}
\centering
\includegraphics[scale=0.13]{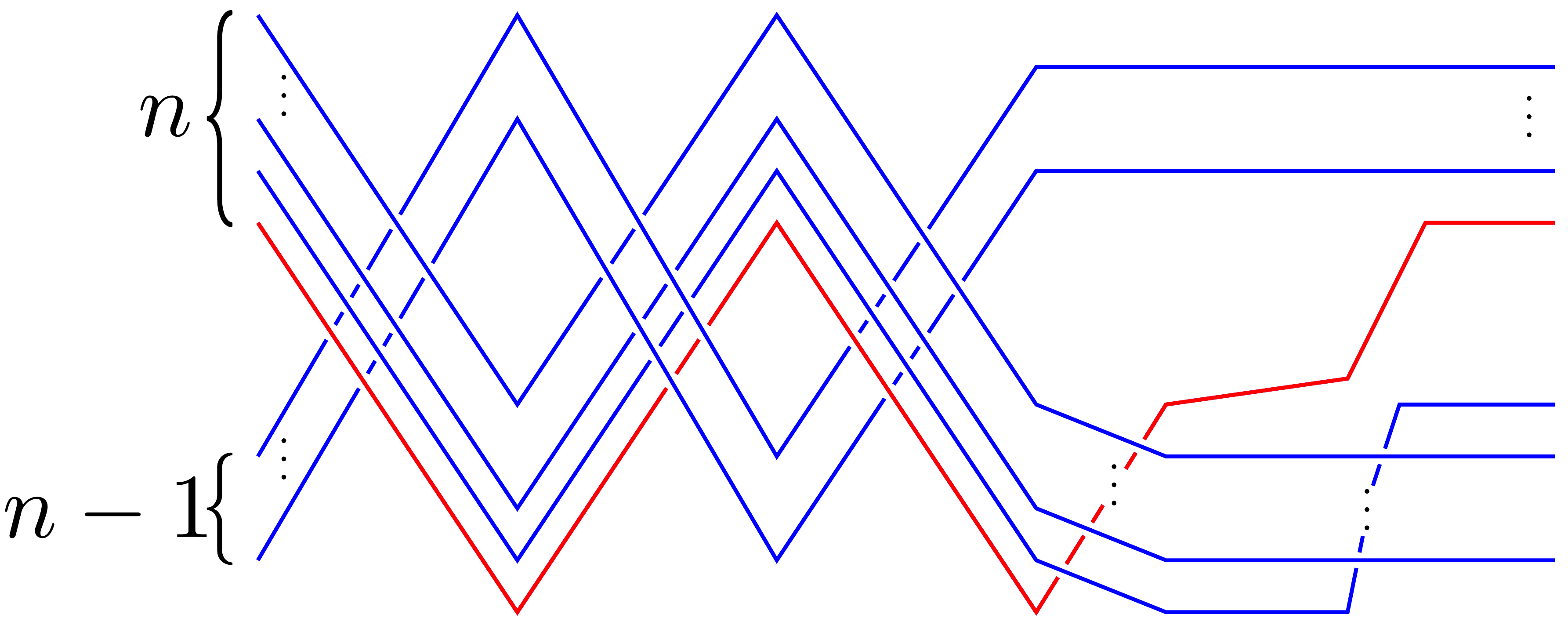}
\caption{The diagram obtained by the smoothing of $\gamma^2_0$. The blue line is the positive braid $X^3_{n-1}[1,2,\ldots,n-2]$, which is not sharp by Claim \ref{claim_cable_nonsharp}.}
\label{gamma2_s}
\end{figure}

By Claims \ref{claim_cable_nonsharp} and \ref{claim_K+_nonsharp}, the top term of $p^0_{K_n}(v)$ arises from the term 
\[
(1-v^{-2})v^{2(3(n-1)\cdot 1+1)}p^0_{T_{2,3}}(v)p^0_{K_{n-1}}(v),
\]
which evaluates to $v^{2(3(n-1)\cdot 1+1)}(-v^4)(-1)^{n-1}v^{3(n-1)^2+3(n-1)}=(-1)^nv^{3n^2+3n}$.
The proof of Lemma \ref{lem_Kn_topterm} is complete.
\end{proof}

\begin{lemma}\label{lem_genus}
When $n$ is even, the genus of $K_n$ is given by
\[
g(K_n)=\frac{3}{2}n^2-\frac{1}{2}n+1.
\]
\end{lemma}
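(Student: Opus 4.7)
My plan is to bracket $g(K_n)$ between matching lower and upper bounds coming from the Bennequin inequality applied to $\beta_n$ and from an explicit quasipositive factorization of $\beta_n$, respectively.

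For the lower bound, I first count the exponent sum of $\beta_n$ as a $2n$--braid. The braid $X_n$ is a product of $n^2$ positive generators, obtained by summing the lengths of the ``ascending'' blocks $[n],[n-1,n+1],\ldots,[1,3,\ldots,2n-1]$ and the ``descending'' blocks $[2n-2,\ldots,2],\ldots,[n+1,n-1],[n]$: $\tfrac{n(n+1)}{2}+\tfrac{n(n-1)}{2}=n^2$. So $X_n^3$ contributes $3n^2$ to the exponent sum, and the tail word contributes $-(n-1)+n+n=n+1$, giving $e(\beta_n)=3n^2+n+1$. The Bennequin inequality then yields $2g(K_n)\ge e(\beta_n)-2n+1=3n^2-n+2$, so $g(K_n)\ge \tfrac{3}{2}n^2-\tfrac{1}{2}n+1$.

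For the matching upper bound, I construct a Seifert surface of this genus by writing $\beta_n$ as a product of $e(\beta_n)=3n^2+n+1$ conjugates of positive generators. The key braid identity is
\[
\sigma_1^{-1}\sigma_2^{-1}\cdots\sigma_{n-1}^{-1}\cdot \sigma_n\sigma_{n-1}\cdots\sigma_1 \;=\; w\,\sigma_1\,w^{-1},\qquad w=\sigma_n\sigma_{n-1}\cdots\sigma_2,
\]
which I would prove by induction on $n$ using $\sigma_i^{-1}\sigma_{i+1}\sigma_i=\sigma_{i+1}\sigma_i\sigma_{i+1}^{-1}$ together with $[\sigma_1,\sigma_j]=1$ for $j\ge 3$. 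Combined with the positivity of $X_n^3$, this yields
\[
\beta_n = X_n^3\cdot (w\sigma_1 w^{-1})\cdot (\sigma_1\sigma_2\cdots\sigma_n),
\]
a product of $3n^2+1+n$ conjugates of positive generators. The associated quasipositive Seifert surface has Euler characteristic $2n-(3n^2+n+1)=-3n^2+n-1$ and therefore genus $\tfrac{3}{2}n^2-\tfrac{1}{2}n+1$, matching the lower bound and giving $g(K_n)=\tfrac{3}{2}n^2-\tfrac{1}{2}n+1$.

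The principal technical point is the braid identity above; once it is established, the two bounds fall out quickly. Carrying out the induction cleanly requires careful bookkeeping of indices, particularly in controlling how $\sigma_1^{\pm 1}$ commutes through $w$ and how the innermost application of $\sigma_i^{-1}\sigma_{i+1}\sigma_i=\sigma_{i+1}\sigma_i\sigma_{i+1}^{-1}$ interacts with the outer conjugation by the already-simplified factor.
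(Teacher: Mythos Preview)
Your argument is correct and the quasipositive factorization you write down is exactly the one the paper uses: the paper attaches $3n^2+n+1$ bands to $2n$ disks, with the single non-standard band corresponding to the word $[-1,\ldots,-(n-1),n,n-1,\ldots,1]$, which is precisely your $w\sigma_1w^{-1}$. So the Seifert surface is the same in both proofs.

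Where you differ is in the minimality step. The paper argues that $K_n$ is an $L$--space knot (invoking the later Proposition on $L$--space surgery) and hence fibered by Ni, then cites Rudolph to say the quasipositive surface is incompressible and therefore equals the fiber; the genus follows. You instead bound $g(K_n)$ from below by the Bennequin inequality $2g(K_n)\ge e(\beta_n)-2n+1$ and observe that the quasipositive surface realizes this bound. Your route is more self-contained: it avoids the forward reference to the $L$--space proposition and does not need Ni's theorem, at the cost of invoking Bennequin's inequality (which is of course implicit in Rudolph's result anyway). Either way the computation $e(\beta_n)=3n^2+n+1$ and the resulting genus are identical.
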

\begin{proof}
By \cite{Ni07} and Proposition \ref{prop_Lspace} below, $K_n$ is an $L$--space knot, and hence fibered. A quasi-positive Seifert surface for $K_n$ can be constructed by attaching $3n^2+n+1$ bands to $2n$ disks, where one of the bands corresponds to the braid 
\[
[-1,-2,\ldots,-(n-1),n,n-1,\ldots,2,1]
\] 
and the remaining bands correspond to the other positive crossings. By \cite{Rud89}, the surface is incompressible and therefore serves as the fiber surface of $K_n$. A direct calculation then yields the genus $g(K_n)=\frac{3}{2}n^2-\frac{1}{2}n+1$.
\end{proof}

\begin{proposition}\label{prop_non_braid_positive}
If $n$ is even, then $K_n$ is not braid positive.
\end{proposition}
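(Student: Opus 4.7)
The plan is to apply Ito's criterion (Theorem \ref{thm_Ito_braidpositive}) to $K_n$ by exhibiting a negative non-zero coefficient of the polynomial
\[
\widetilde{P}_{K_n}(\alpha,z) = (-\alpha)^{-g(K_n)} P_{K_n}(v,z)\big|_{-v^2=\alpha}.
\]
Since $K_n$ is a knot, the expansion $P_{K_n}(v,z) = \sum_{i \ge 0} p^i_{K_n}(v) z^{2i}$ shows that the $z^0$-coefficient of $\widetilde{P}_{K_n}(\alpha,z)$ is exactly $(-\alpha)^{-g(K_n)} p^0_{K_n}(v)\big|_{-v^2=\alpha}$. It therefore suffices to produce a negative coefficient in this one-variable polynomial in $\alpha$, since any such coefficient survives as a nonzero coefficient of $\widetilde{P}_{K_n}(\alpha, z)$ itself.

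The computation proceeds directly from the two lemmas already established. By Lemma \ref{lem_Kn_topterm}, the top term of $p^0_{K_n}(v)$ is $(-1)^n v^{3n^2+3n}$. Since $3n(n+1)$ is always even, substituting $v^2 = -\alpha$ converts this to $(-1)^n(-\alpha)^{(3n^2+3n)/2}$, which is the $\alpha$-leading term of $p^0_{K_n}(v)\big|_{-v^2=\alpha}$. By Lemma \ref{lem_genus}, for even $n$ we have $g(K_n) = \tfrac{3n^2-n}{2} + 1$, so multiplication by $(-\alpha)^{-g(K_n)}$ yields a top term
\[
(-1)^n (-\alpha)^{(3n^2+3n)/2 - (3n^2-n)/2 - 1} = (-1)^n(-\alpha)^{2n-1}.
\]
For even $n$ this simplifies to $-\alpha^{2n-1}$, so the coefficient of $\alpha^{2n-1} z^0$ in $\widetilde{P}_{K_n}(\alpha,z)$ is $-1$.

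This is a nonzero, negative coefficient of $\widetilde{P}_{K_n}(\alpha,z)$, so Theorem \ref{thm_Ito_braidpositive} immediately rules out the braid positivity of $K_n$. The substantive technical obstacle has already been handled: producing the top term of $p^0_{K_n}(v)$ in Lemma \ref{lem_Kn_topterm} (via the skein-tree analysis combined with the sharpness obstructions of Claims \ref{claim_cable_nonsharp} and \ref{claim_K+_nonsharp}) and identifying the genus in Lemma \ref{lem_genus}. Given these, the only remaining task is a careful parity and sign bookkeeping, which I would present as a short finite check. The restriction to even $n$ enters precisely here: for odd $n$ the same exponent calculation yields $+\alpha^{2n-1}$, which is why Ito's criterion fails to detect non-braid-positivity in that case, as noted after Theorem \ref{thm_main}.
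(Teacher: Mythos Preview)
Your proposal is correct and follows essentially the same route as the paper's proof: invoke Lemmas \ref{lem_Kn_topterm} and \ref{lem_genus}, read off the leading term of the $z^0$--part of $\widetilde{P}_{K_n}(\alpha,z)$ as $(-1)^n(-\alpha)^{2n-1}$, observe that for even $n$ this equals $-\alpha^{2n-1}$, and conclude via Theorem \ref{thm_Ito_braidpositive}. The paper writes $n=2k$ and computes $-\alpha^{4k-1}$, but the argument is otherwise identical.
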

\begin{proof}
Set $n=2k$. By Lemmas \ref{lem_Kn_topterm} and \ref{lem_genus}, the top term of $(-\alpha)^{-g(K_{2k})}p^0_{K_{2k}}(v)|_{-v^2=\alpha}$ is
\begin{align*}
(-\alpha)^{-(6k^2-k+1)}(-1)^{2k}(-\alpha)^{6k^2+3k}
&=(-\alpha)^{4k-1}\\
&=-\alpha^{4k-1}.
\end{align*}
Since this is the term of $z^0$ in $\widetilde{P}_{K_n}(\alpha,z)$, $\widetilde{P}_{K_n}(\alpha,z)$ is a non-positive polynomial. Therefore, Theorem \ref{thm_Ito_braidpositive} implies that $K_n$ is not braid positive.
\end{proof}

\section{$L$--space surgery}

In this section, we prove that $K_n$ admits a Dehn surgery yielding an $L$--space if $n$ is even. Throughout, we set $n = 2k$.  
We apply the Montesinos trick \cite{Mon75}: for a strongly invertible link $L$ in $S^3$, the manifold obtained by Dehn surgery on $L$ is the double branched cover of a link $\ell$ arising from a tangle replacement on the axis. See also \cite{BK24, Ter22} for details.

Figure \ref{strongly_invertible} illustrates a strongly invertible position of the link $K \cup C_1 \cup C_2 \cup C_3$, where performing $(-1)$--surgery on $C_1$ and $1$--surgery on both $C_2$ and $C_3$ transforms $K$ into $K_{2k}$. Note that $r$--surgery on $K$ corresponds to $(8k^2 + r)$--surgery on $K_{2k}$.

\begin{figure}
\centering
\includegraphics[scale=0.14]{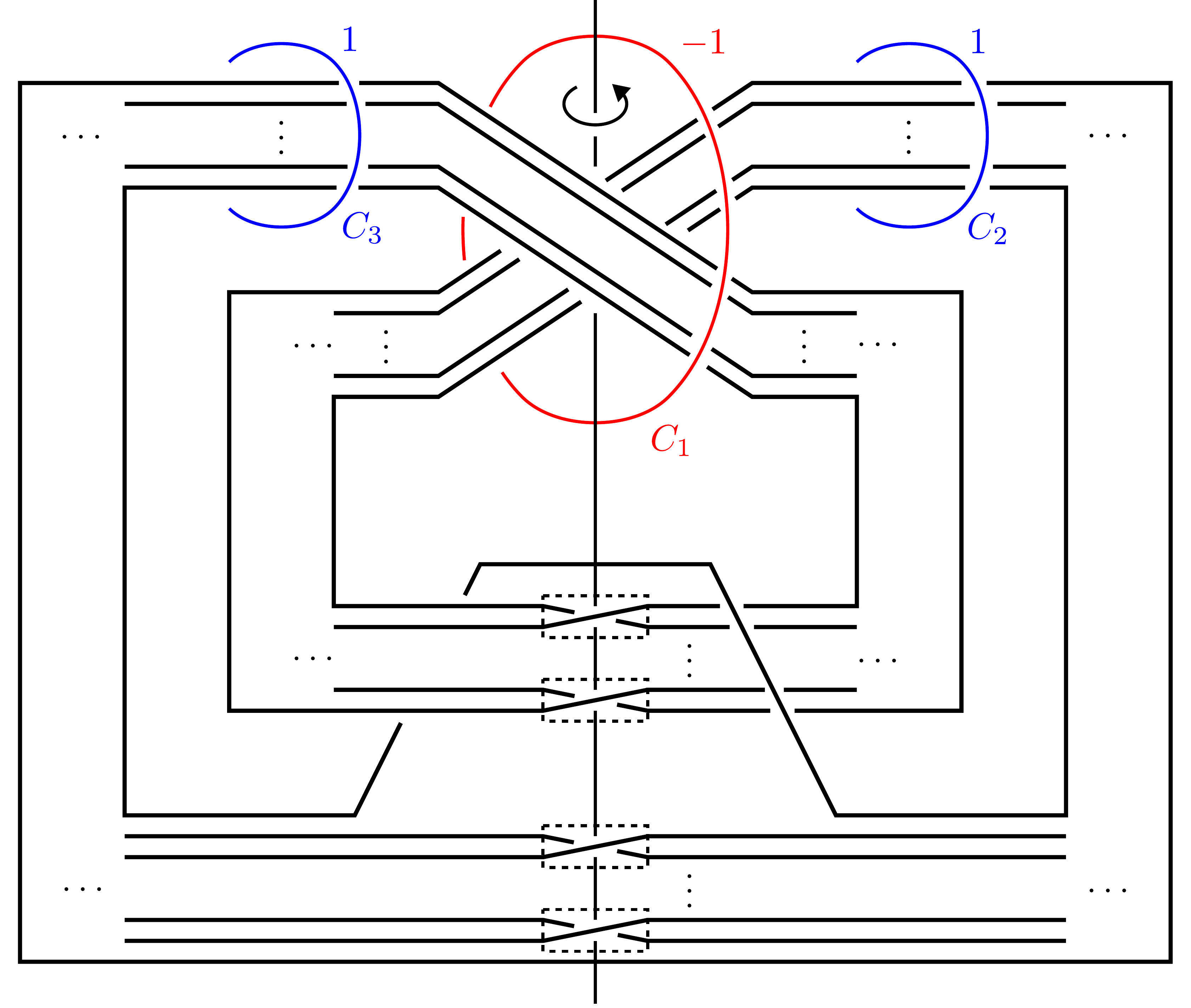}
\caption{A strongly invertible position of the link $K \cup C_1 \cup C_2 \cup C_3$. The dashed boxes total $2k-1$: $k$ in the upper part and $k-1$ in the lower part.}
\label{strongly_invertible}
\end{figure}

\begin{proposition}\label{prop_Lspace}
The $(12k^2 + 2k)$--surgery on $K_{2k}$ yields an $L$--space. 
\end{proposition}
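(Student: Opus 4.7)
The plan is to apply the Montesinos trick exactly as set up in the paragraph preceding the statement. Since $r$--surgery on $K$ corresponds to $(8k^2+r)$--surgery on $K_{2k}$, the target slope $12k^2+2k$ on $K_{2k}$ corresponds to $r=4k^2+2k$ on $K$. Under the involution exhibited in Figure~\ref{strongly_invertible}, each of the strongly invertible components $K, C_1, C_2, C_3$ descends to an arc in $S^3$, and performing the surgeries $(4k^2+2k, -1, +1, +1)$ corresponds to inserting four rational tangles into a fixed tangle diagram. The surgered manifold is then the double branched cover of the resulting link $\ell$.

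My first step would be to draw the quotient tangle explicitly from Figure~\ref{strongly_invertible}, recording the rational slope of each tangle insertion. The slope for $K$ depends on the writhe of $K$ in the symmetric diagram, so I would carefully compute that writhe and convert $4k^2+2k$ into the corresponding tangle fraction via the standard Montesinos formula. The slopes for $C_1, C_2, C_3$ likewise become integer (or very simple rational) tangles. Once the link $\ell$ is in hand, I would try to simplify it by Reidemeister moves, isotopy, and standard rational--tangle identities until it is put into a standard form.

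The second step, and the main obstacle, is to show that $\ell$ is quasi-alternating; by Ozsv\'ath--Szab\'o, the double branched cover of a quasi-alternating link is then an $L$--space, which finishes the proof. I expect $\ell$ to be either honestly alternating (which would make this immediate) or a Montesinos link whose fractions fit one of the known patterns guaranteeing quasi-alternation. If direct alternation fails, my fallback plan is to produce a quasi-alternating tree for $\ell$ by induction on $k$: choose a crossing of $\ell$, check that both resolutions yield links with strictly smaller determinant whose sum equals $\det(\ell)$, and verify by induction that both resolutions are quasi-alternating. A natural candidate crossing is one coming from the $K$--tangle, which should allow the induction to reduce $k$ by one.

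One technical point that will need care is the determinant bookkeeping: to run the quasi-alternating induction (or to check a criterion like Lisca--Stipsicz for Montesinos links), I need the exact value of $\det(\ell)$, equivalently the order of $H_1$ of the surgered manifold, which is $12k^2+2k$. Matching this against the determinants of the two resolutions should be a routine but crucial computation. If both Montesinos-link criteria and an inductive quasi-alternating argument succeed, the proposition follows immediately; if neither works cleanly, I would retreat to showing directly that $\ell$ admits an alternating diagram by a sequence of flypes and tangle slides applied to the image of Figure~\ref{strongly_invertible} after the tangle replacements.
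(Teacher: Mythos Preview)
Your Montesinos--trick setup is correct and matches the paper: the slope $12k^2+2k$ on $K_{2k}$ corresponds to $4k^2+2k$ on $K$, the writhe computation gives a $(-1)$--tangle insertion, and the goal is to show the resulting branch link $\ell$ has $L$--space double branched cover.

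The gap is in your second step. Your primary plan is to show $\ell$ is quasi-alternating, and your fallbacks are an alternating diagram or a QA tree inducting on $k$. In the paper's actual resolution tree, one of the leaves is the Montesinos knot $\ell_0 = M\bigl(-\tfrac{2}{3},\tfrac{1}{2},\tfrac{2k}{6k-1}\bigr)$, and the paper explicitly records (citing Issa) that this knot is \emph{not} quasi-alternating. So the natural QA argument fails at the base, and there is no evident alternative tree or alternating projection; your retreat strategies have no reason to succeed.

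What the paper does instead is use the more general $L$--space triad (Propositions~2.1 of \cite{OS05A} and \cite{OS05B}): if $\det(L)=\det(L_0)+\det(L_\infty)$ and the double branched covers of $L_0$ and $L_\infty$ are $L$--spaces---with no quasi-alternating hypothesis---then so is that of $L$. The paper smooths $2k-1$ crossings inside a single twist box of $\ell$ (not an induction on $k$), checks the determinant additivity at each stage, and reduces to two Montesinos leaves $\ell_0$ and $\ell_\infty^{\,2k-1}=M\bigl(\tfrac{2}{5},-\tfrac{1}{2},\tfrac{2k}{14k-1}\bigr)$. Their double branched covers are small Seifert fibered spaces, and the Lisca--Mati\'c/Lisca--Stipsicz arithmetic criterion is applied directly to certify these as $L$--spaces. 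You do mention Lisca--Stipsicz in passing, but only as a bookkeeping aid for a QA argument; the point is that it replaces the QA argument, and that replacement is essential here.
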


\begin{proof}
When $k=1$, the $14$--surgery on $K_2=o9\_30634$ yields an $L$--space \cite{BK24}. Assume that $k\ge 2$.
Consider the quotient of the link in Figure \ref{strongly_invertible} under the involution around the axis, as shown in Figure \ref{quotient}. Note that the surgery coefficient on $K$ is $4k^2 + 2k$, and the writhe of $K$ in the diagram is $4k^2 + 2k + 1$. Hence, $(4k^2 + 2k)$--surgery on $K$ corresponds to a tangle replacement by the $(-1)$--tangle.

\begin{figure}
\centering
\includegraphics[scale=0.12]{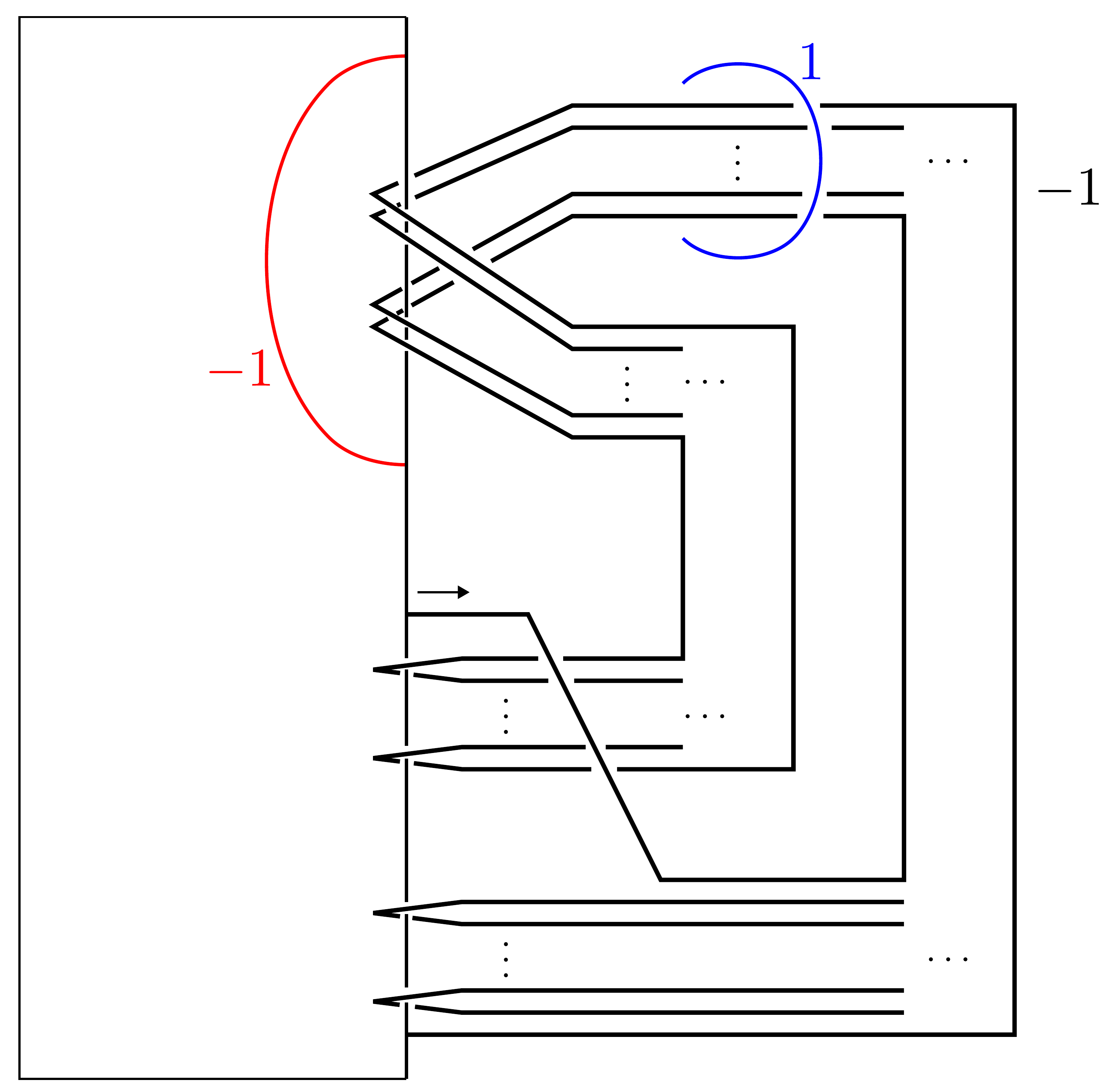}
\caption{The quotient by the involution around the axis.}
\label{quotient}
\end{figure}

Figures \ref{ell_deform_1} and \ref{ell_deform_2} illustrate a deformation of the quotient. Performing the indicated surgeries and tangle replacements on the right of Figure \ref{ell_deform_2} yields the link diagram in Figure \ref{ell_deform_3}, and we denote the resulting link by $\ell$.

\begin{figure}
\centering
\includegraphics[scale=0.10]{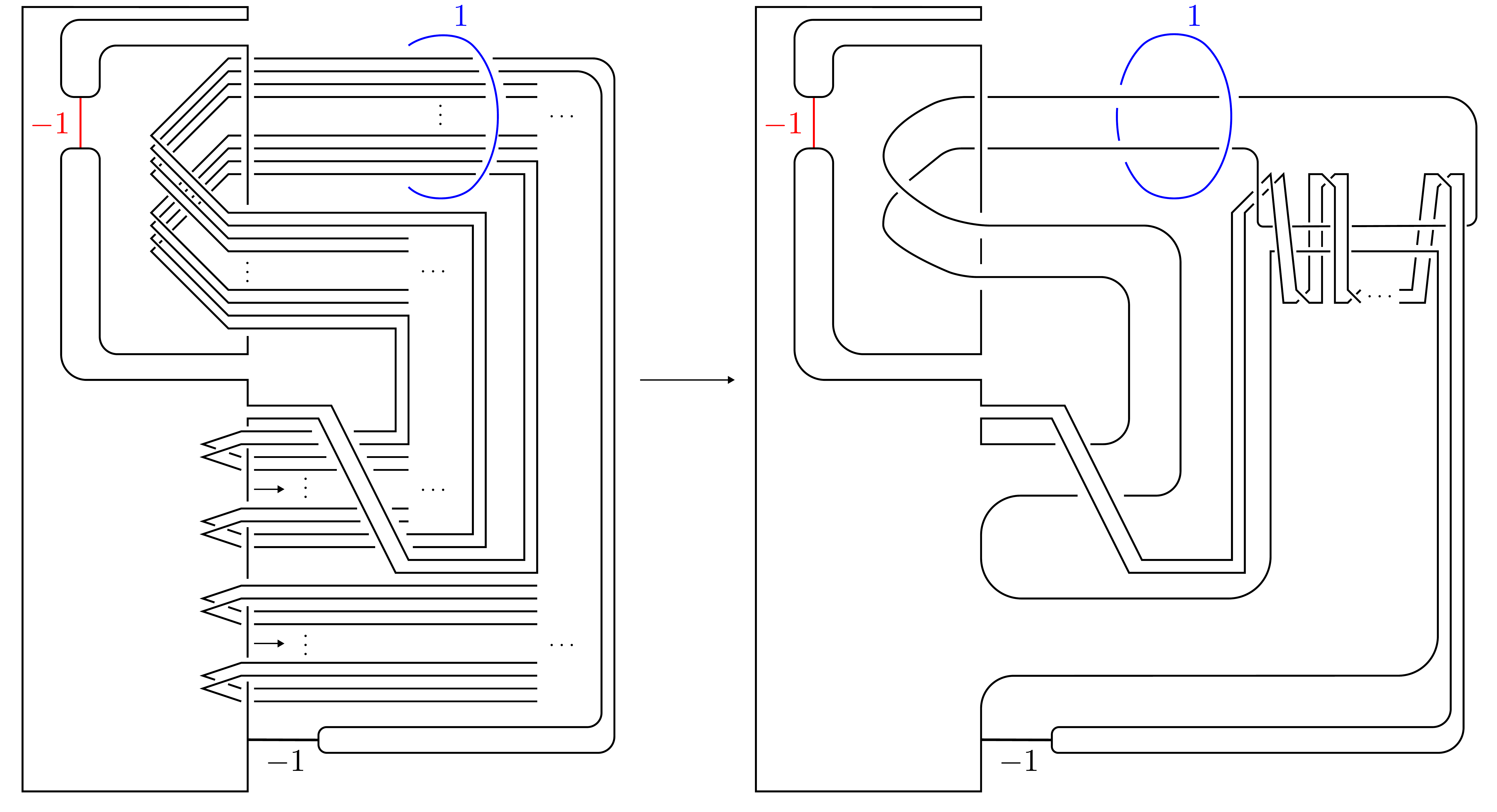}
\caption{A deformation of the quotient from Figure \ref{quotient}.}
\label{ell_deform_1}
\end{figure}

\begin{figure}
\centering
\includegraphics[scale=0.10]{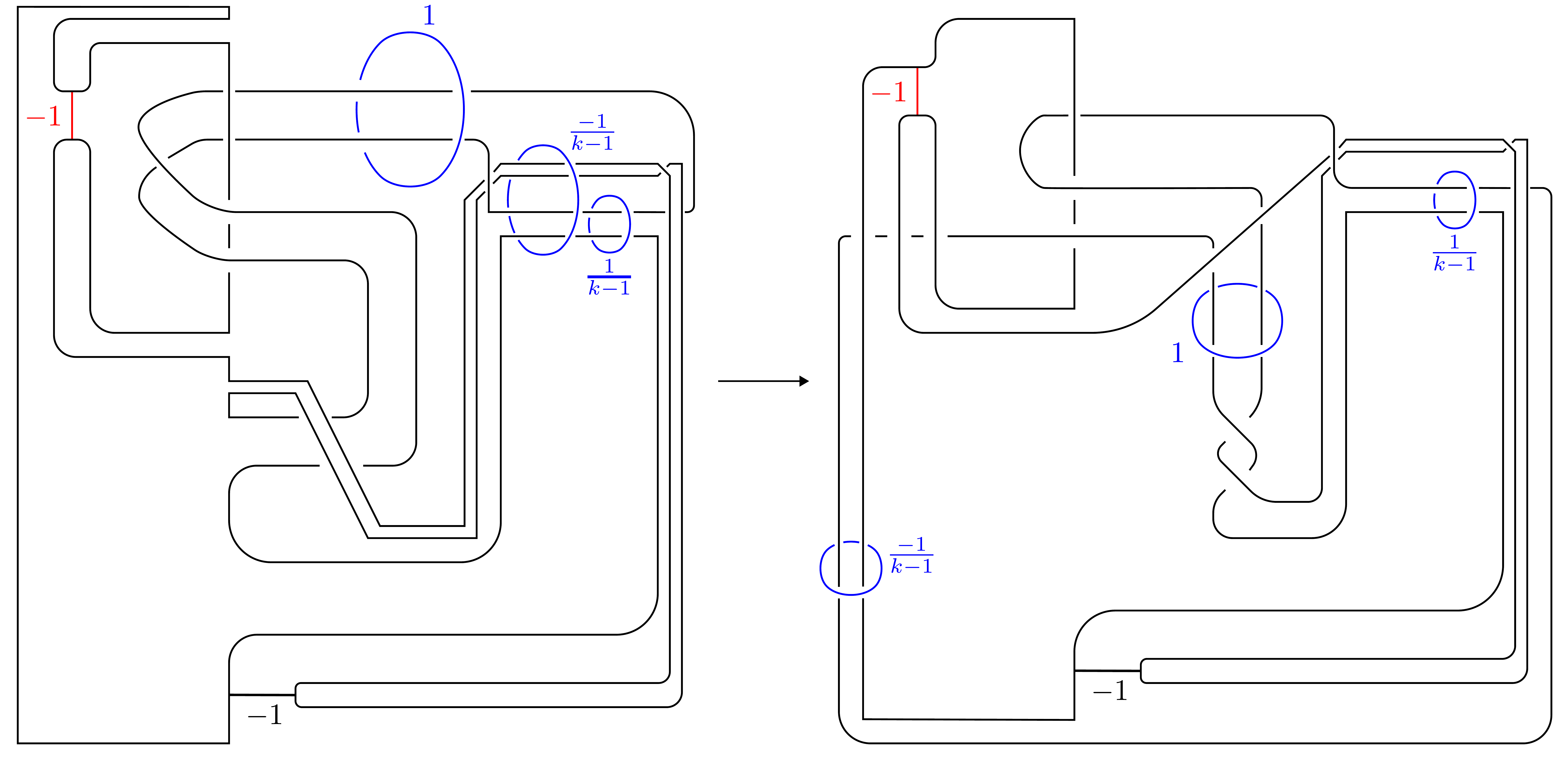}
\caption{Continued from Figure \ref{ell_deform_1}.}
\label{ell_deform_2}
\end{figure}

\begin{figure}
\centering
\includegraphics[scale=0.10]{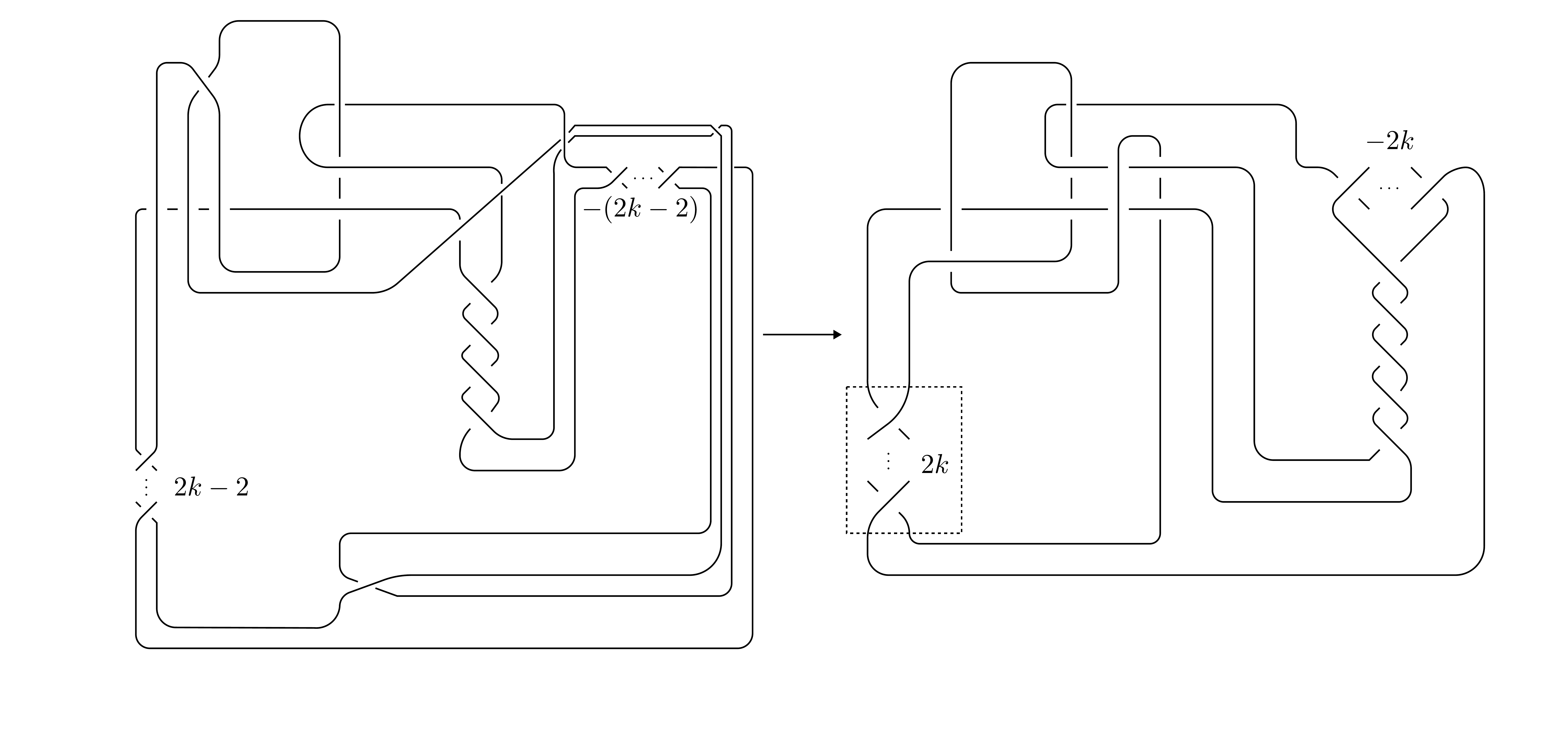}
\caption{Continued from Figure \ref{ell_deform_2}. Integers indicate the number of half-twists: right-handed if positive, left-handed otherwise.}
\label{ell_deform_3}
\end{figure}

\begin{claim}\label{DBC_Lspace}
The double branched cover of the link $\ell$ is an $L$--space.
\end{claim}

\begin{proof}
Using the Goeritz matrix derived from a checkerboard coloring of the diagram of $\ell$ (Figure \ref{ell_deform_3}), we compute $\mathrm{det}(\ell) = 12k^2 + 2k$.  
By smoothing the $2k-1$ crossings indicated by the dashed box in Figure \ref{ell_deform_3} as shown in Figure \ref{smoothing_tree}, we obtain the links (or knots) $\ell^i_{\infty}$ $(i = 1, \ldots, 2k-1)$ and a knot $\ell_0$.

A direct computation gives $\mathrm{det}(\ell^i_{\infty}) = 12k^2 + 2k - (6k + 1)i$ and $\mathrm{det}(\ell_0) = 6k + 1$. Hence,
\[
\mathrm{det}(\ell^i_{\infty}) = \mathrm{det}(\ell^{i+1}_{\infty}) + \mathrm{det}(\ell_0)
\quad \text{for } i = 1, \ldots, 2k - 2,
\quad \text{and} \quad
\mathrm{det}(\ell) = \mathrm{det}(\ell^1_{\infty}) + \mathrm{det}(\ell_0).
\]

By \cite[Proposition 2.1]{OS05A} and \cite[Proposition 2.1]{OS05B}, it suffices to show that the double branched covers of $\ell_0$ and $\ell^{2k-1}_{\infty}$ are both $L$--spaces.

\begin{figure}
\centering
\includegraphics[scale=0.12]{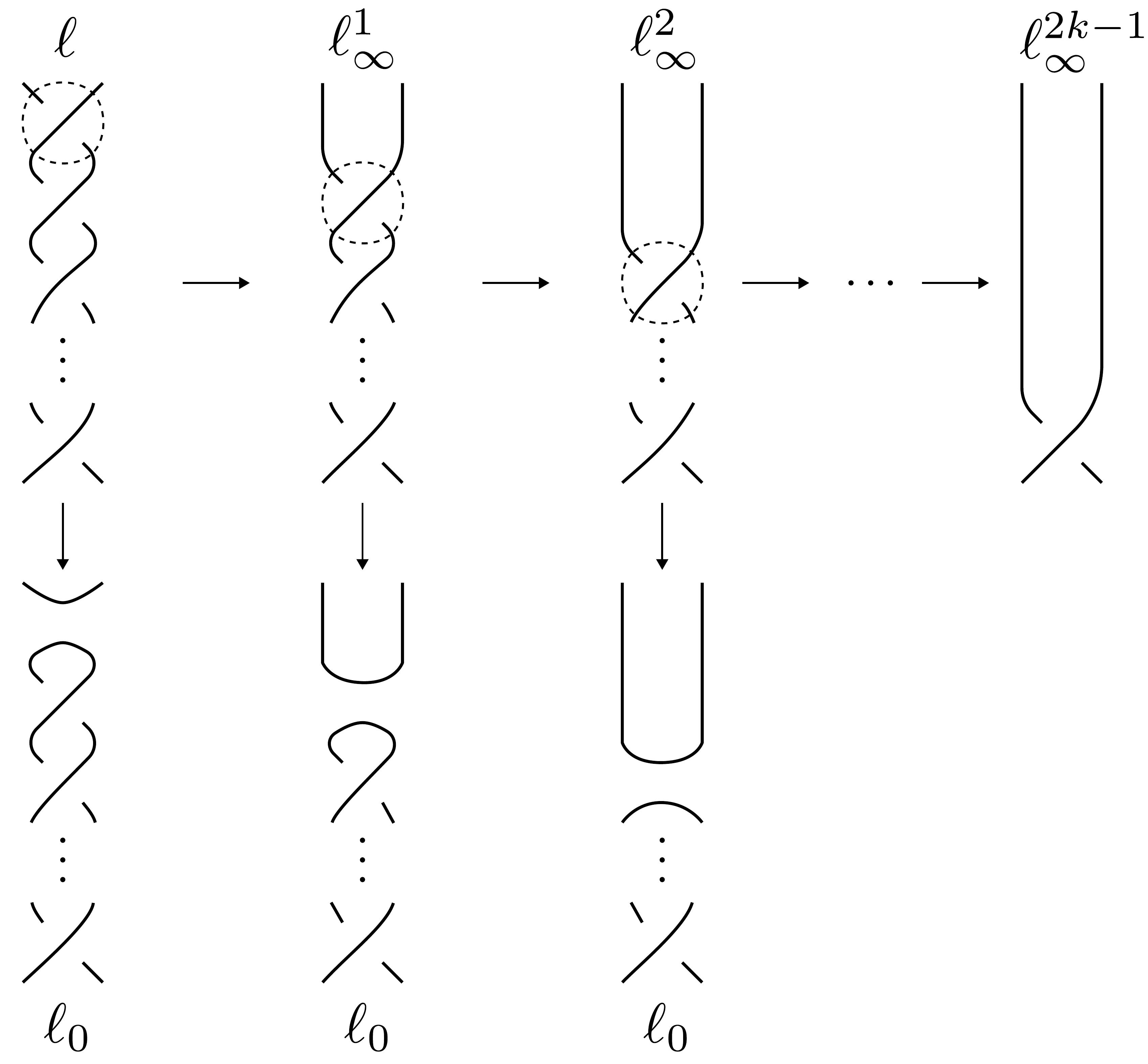}
\caption{Smoothing the crossings indicated by the dashed box in Figure \ref{ell_deform_3}.}
\label{smoothing_tree}
\end{figure}

\begin{figure}
\centering
\includegraphics[scale=0.09]{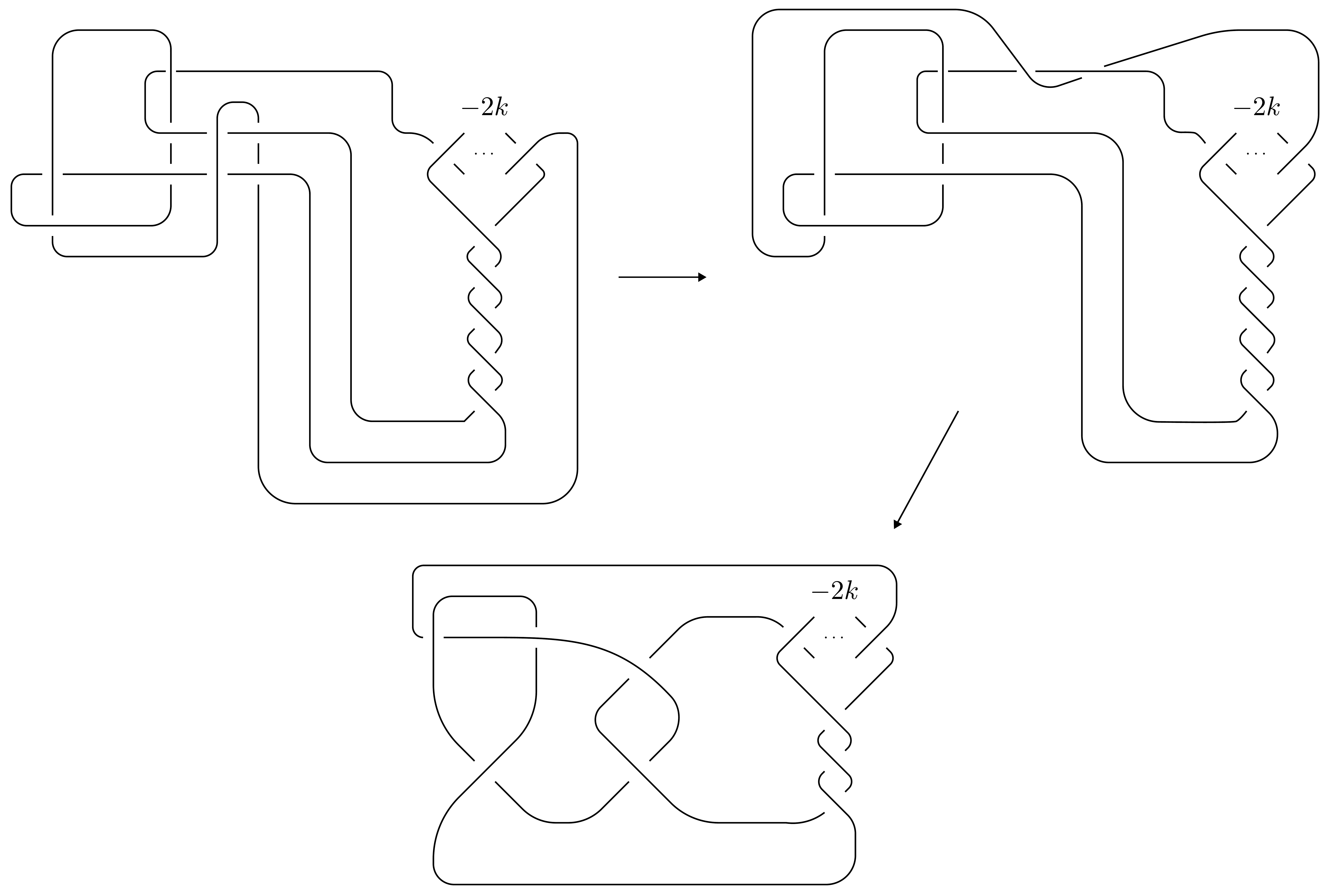}
\caption{$\ell_0$ is a Montesinos knot.}
\label{ell_0}
\end{figure}

As shown in Figure \ref{ell_0}, the knot $\ell_0$ is the Montesinos knot $M(-\frac{2}{3}, \frac{1}{2}, \frac{2k}{6k - 1})$ (not quasi-alternating by \cite{Iss18}). Its double branched cover is the Seifert fibered space
\[
M(0; -\tfrac{2}{3}, \tfrac{1}{2}, \tfrac{2k}{6k - 1}) = M(-1; \tfrac{1}{2}, \tfrac{2k}{6k - 1}, \tfrac{1}{3}),
\]
following the convention of \cite{LS07}, consistent with \cite{BK24, Ter22}.

According to \cite{LM04, LS07}, such a Seifert fibered space $M(-1; r_1, r_2, r_3)$ with $1 \ge r_1 \ge r_2 \ge r_3 \ge 0$ is an $L$--space if and only if there are no relatively prime integers $m > a > 0$ satisfying:
\[
m r_1 < a < m(1 - r_2) \quad \text{and} \quad m r_3 < 1.
\]
For $r_1 = \tfrac{1}{2}$, $r_2 = \tfrac{2k}{6k - 1}$, and $r_3 = \tfrac{1}{3}$, the condition $m r_3 < 1$ with $m > a > 0$ gives $a = 1$, $m = 2$, but this violates $m r_1 < a$. Thus, the double branched cover of $\ell_0$ is an $L$--space.

\begin{figure}
\centering
\includegraphics[scale=0.09]{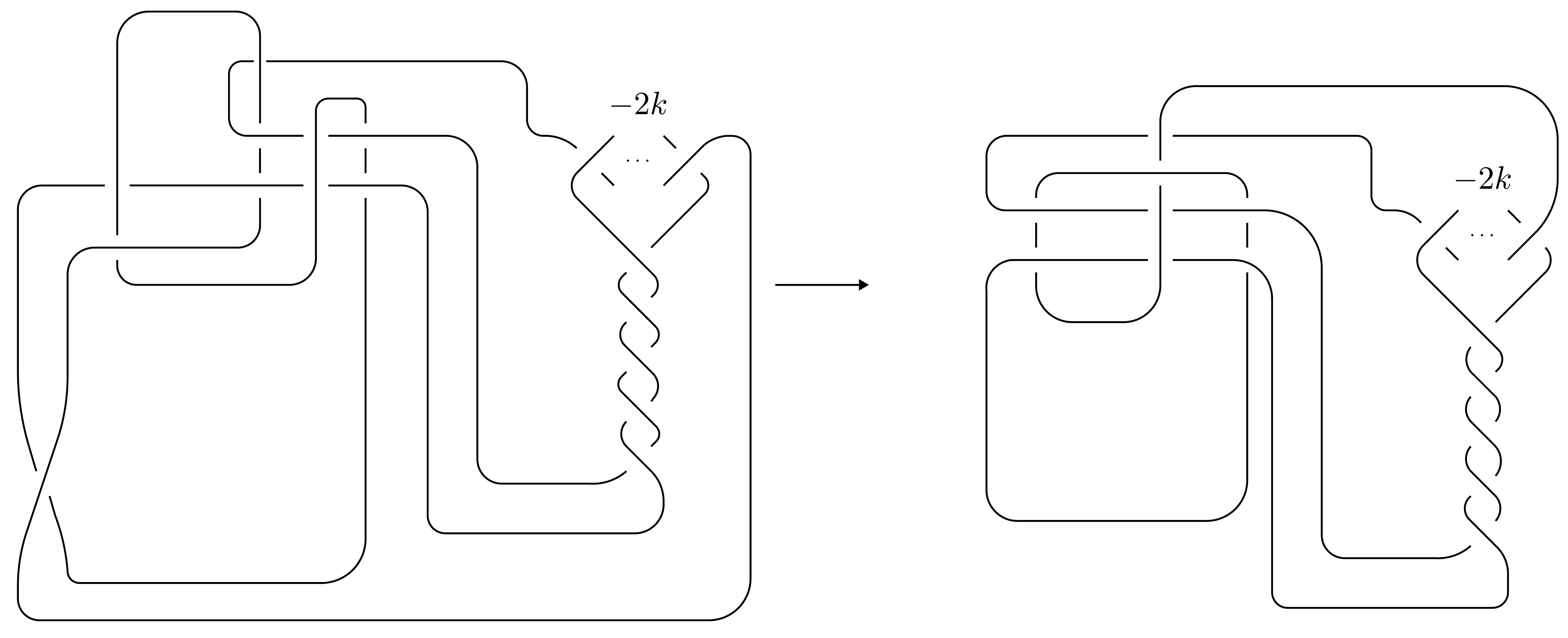}
\caption{A deformation of $\ell^{2k-1}_{\infty}$.}
\label{ell_infty_1}
\end{figure}

\begin{figure}
\centering
\includegraphics[scale=0.09]{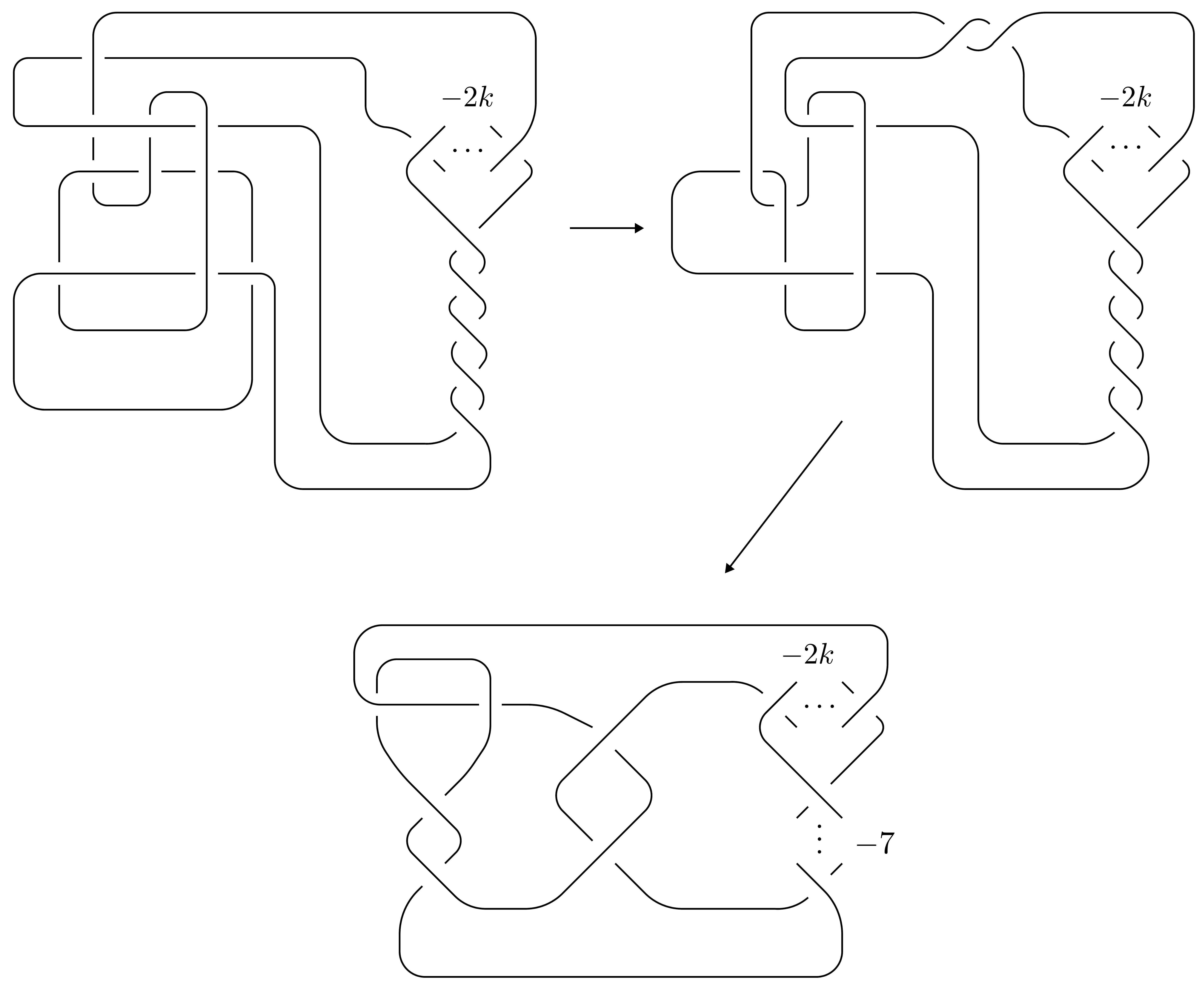}
\caption{Continued from Figure \ref{ell_infty_1}. $\ell^{2k-1}_{\infty}$ is a Montesinos knot.}
\label{ell_infty_2}
\end{figure}

Similarly, Figures \ref{ell_infty_1} and \ref{ell_infty_2} show that $\ell^{2k-1}_{\infty}$ is the Montesinos knot $M(\tfrac{2}{5}, -\tfrac{1}{2}, \tfrac{2k}{14k - 1})$, and its double branched cover is
\[
M(0; \tfrac{2}{5}, -\tfrac{1}{2}, \tfrac{2k}{14k - 1}) = M(-1; \tfrac{1}{2}, \tfrac{2}{5}, \tfrac{2k}{14k - 1}).
\]

Set $r_1 = \tfrac{1}{2}$, $r_2 = \tfrac{2}{5}$, and $r_3 = \tfrac{2k}{14k - 1}$. The condition $m r_3 < 1$ implies $m < 7 - \tfrac{1}{2k}$, so $m = 2, 3, \ldots, 6$. But for these values, there exists no integer $a$ satisfying $\tfrac{1}{2}m < a < \tfrac{3}{5}m$. Therefore, the double branched cover of $\ell^{2k-1}_{\infty}$ is an $L$--space.
\end{proof}

By the Montesinos trick and Claim \ref{DBC_Lspace}, the proof of Proposition \ref{prop_Lspace} is complete.
\end{proof}

\section{Hyperbolicity}

To complete the proof of Theorem \ref{thm_main}, we show that $K_n$ is hyperbolic. To this end, we prove that the braid $\beta_n$ representing $K_n$, regarded as an element of the mapping class group of a punctured $2$--disk, is pseudo-Anosov in the sense of the Nielsen--Thurston classification. We apply the criterion of Bestvina and Handel \cite{BH95}, see also \cite{HK06}.

Let $D$ be a $2$-disk, and $P=\{p_1,\ldots,p_k\}$ be a set of $k$ punctures. Take $k$ small circles $c_i$ $(i=1,\ldots,k)$, each centered at $p_i$, such that the interior of $c_i$  contains no other punctures. Choose a finite graph $G$ embedded in $D$ such that it is homotopy equivalent to $D \setminus P$, and contains $C = \{c_1, \ldots, c_k\}$ as a subgraph. We allow $G$ to have loops, but assume that it has no vertices of valence $1$ or $2$. Let $V(G)$ and $E(G)$ denote the sets of vertices and edges of $G$, respectively. A {\it walk\/} $\tau$ in $G$ is a finite sequence alternating between vertices and edges $(v_1,e_1,v_2,e_2,\ldots,v_{\ell},e_{\ell},v_{\ell+1})$, where $v_1,\ldots,v_{\ell+1}\in V(G)$ and $e_1,\ldots,e_{\ell}\in E(G)$ such that the endpoints of $e_i$ are $v_i$ and $v_{i+1}$. There is no confusion in denoting this walk by $\tau=e_1e_2 \cdots e_{\ell}$. Let $W(G)$ be the set of all walks in $G$.

A homeomorphism $f$ on $D$ that preserves the set of punctures $P$ induces a {\it graph map\/} 
\[
g\colon (V(G),W(G))\to (V(G),W(G)),
\]
which preserves the set $C$ set-wise. (The definition of a graph map involves the notion of a {\it fibered surface\/} associated with $G$, but we omit those details here.)

\begin{definition}
A graph map $g$ is said to be {\it efficient\/} if there are no integers $m\ge 1$ and edges $e\in E(G)$ such that 
\[
g^m(e)=\cdots e_i e_i \cdots
\]
for some $e_i\in E(G)$. If such integer $m\ge 1$ and an edge $e\in E(G)$ exist, then we say that $g^m(e)$ has a {\it back track\/}.
\end{definition}

Let $\mathrm{Vec}(G)$ be the real vector space spanned by $E(G)$. Each walk in $G$ determines an element of $\mathrm{Vec}(G)$ by mapping to the linear combination in which the coefficient of each $e_i$ is its multiplicity in the walk. For a graph map $g$, we define its transition matrix $T_g$ as the linear transformation
\[
\mathcal{T}_g\colon\textrm{Vec}(G)\to\textrm{Vec}(G)
\]
that maps each edge $e$ to $g(e)$, regarded as an element of $\mathrm{Vec}(G)$. 

Let $C^{\textrm{pre}}$ be the set of edges $e\in E(G)$ such that $g^m(e)$ is contained in $C$ for some $m\ge 1$, and define the set of {\it real edges} by 
\[
E^{\textrm{re}}(G)=E(G)\backslash (C\cup C^{\textrm{pre}}).
\]
By the definition of $g$ and $C^{\textrm{pre}}$, the transition matrix $\mathcal{T}_g$
has the block form:
\[
\mathcal{T}_g=\left(
\begin{array}{ccc}
\mathcal{C} & \mathcal{A} & \mathcal{B}\\
0 & \mathcal{C}^{\textrm{pre}} & \mathcal{D}\\
0 & 0 & \mathcal{T}_g^{\textrm{re}}
\end{array}
\right)
\]
where $\mathcal{C}$, $\mathcal{C}^{\textrm{pre}}$ and $\mathcal{T}_g^{\textrm{re}}$ are the transition matrices associated with $C$, $C^{\textrm{pre}}$ and $E^{\textrm{re}}(G)$, respectively. 

Recall that a nonnegative square matrix $M$ is called {\it irreducible\/} if, for any indices $(i,j)$, there exists  $m\ge 1$ such that the $(i,j)$--entry of $M^m$ is positive. Then, by the Perron--Frobenius theorem, such a matrix $M$ has a real, positive eigenvalue greater than the absolute values of all other eigenvalues.  This eigenvalue is called the {\it Perron--Frobenius eigenvalue\/} and is denoted by $\lambda(M)$ (which coincides with the spectral radius of M).

\begin{theorem}[\cite{BH95}]\label{thm_BH}
Let $f$ be a homeomorphism on $D$ that preserves the set of punctures $P$, and $g$ be an induced graph map for $f$. Suppose that
\begin{itemize}
\item $g$ is efficient, and
\item the transition matrix $\mathcal{T}_g^{{\rm re}}$ with respect to the real edges is irreducible with $\lambda(\mathcal{T}_g^{{\rm re}})>1$.
\end{itemize}
Then the mapping class of $f$ (up to isotopy) is pseudo-Anosov with dilatation equal to $\lambda(\mathcal{T}_g^{{\rm re}})$.
\end{theorem}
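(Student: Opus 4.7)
The plan is to follow the Bestvina--Handel framework: starting from the combinatorial data of an efficient graph map $g$, construct a pair of transverse measured foliations on $D\setminus P$ that are invariant under an isotopy representative of $f$, and whose transverse measures are multiplied by $\lambda(\mathcal{T}_g^{\textrm{re}})$ and $1/\lambda(\mathcal{T}_g^{\textrm{re}})$ respectively. Once such foliations are in hand, the Nielsen--Thurston classification forces the mapping class of $f$ to be pseudo-Anosov with dilatation $\lambda(\mathcal{T}_g^{\textrm{re}})$.

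First I would fix a \emph{fibered surface} $F\subset D$ associated with $G$: a regular-neighbourhood thickening of $G$ decomposed into rectangles (one per edge), junctions (one per vertex), and peripheral annuli around the punctures bounded by the circles $c_i$. The homeomorphism $f$ is isotoped so that it carries $F$ into itself and respects this decomposition; collapsing each rectangle vertically onto its core edge then recovers the graph map $g$. In this picture the peripheral edges $C$ correspond to the puncture boundaries, the pre-peripheral edges $C^{\textrm{pre}}$ are those that eventually map into punctures under iteration of $g$, and $E^{\textrm{re}}(G)$ carries the essential dynamics.

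Next, the Perron--Frobenius theorem supplies a strictly positive eigenvector $w$ of $\mathcal{T}_g^{\textrm{re}}$ with eigenvalue $\lambda:=\lambda(\mathcal{T}_g^{\textrm{re}})>1$. I would use the entries of $w$ as the widths of the real-edge rectangles, and propagate consistent widths to the $C^{\textrm{pre}}$ and $C$ rectangles using the upper-triangular block structure of $\mathcal{T}_g$. These widths assemble into a transverse measure on the horizontal foliation (whose leaves are the core arcs of the rectangles, running across the fibration), while the fibration itself yields the vertical foliation. By construction, $f$ maps each real rectangle across a walk whose total weighted width equals $\lambda$ times the original, so the horizontal measure is multiplied by $\lambda$ and the vertical by $1/\lambda$.

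The main obstacle, and the technical heart of the argument, is to verify that these local data patch into bona fide singular measured foliations with only the allowed singularities, namely prongs at vertices of $G$ and at punctures, ruling out $1$- or $2$-pronged interior singularities that would prevent the classification from applying. This is precisely where \emph{efficiency} is essential: any back-track in some $g^m(e)$ would create a fold forcing two leaf segments to cancel, collapsing the transverse measure. Once the foliations are constructed, irreducibility of $\mathcal{T}_g^{\textrm{re}}$ excludes the existence of an $f$-invariant multicurve (which would correspond to a proper $\mathcal{T}_g^{\textrm{re}}$-invariant coordinate subspace of $\textrm{Vec}(G)$ supported on real edges), and the strict inequality $\lambda>1$ excludes the finite-order case. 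Thurston's classification then identifies $f$ as pseudo-Anosov, with dilatation exactly $\lambda$.
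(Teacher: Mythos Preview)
The paper does not contain a proof of this theorem: it is quoted from \cite{BH95} as a black-box tool and then applied in Lemma~\ref{lem_pA}. So there is no ``paper's own proof'' to compare your proposal against.

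That said, your outline is a faithful high-level summary of the Bestvina--Handel construction. The fibered surface, the Perron--Frobenius width assignment, the role of efficiency in preventing folds, and the use of irreducibility together with $\lambda>1$ to rule out the reducible and periodic cases are exactly the ingredients of their argument. Be aware, though, that the passage from ``widths on rectangles'' to genuine singular measured foliations with admissible prong structure is where most of the work in \cite{BH95} lies; your sketch correctly identifies this as the technical heart but does not carry it out. In particular, one must build lengths (a left Perron--Frobenius eigenvector) as well as widths, verify the switch conditions at junctions, and show that the resulting bigon-free train track supports a pseudo-Anosov representative. For the purposes of the present paper none of this is needed: Theorem~\ref{thm_BH} is simply invoked.
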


\begin{lemma}\label{lem_pA}
The mapping class corresponding to the braid 
\[
\beta_n=X_n^3\cdot [-1,-2,\ldots,-(n-1),n,n-1,n-2,\ldots,2,1,1,2,3,\ldots,n]\ (n\ge 2)
\]
is pseudo-Anosov.
\end{lemma}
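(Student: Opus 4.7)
The plan is to apply the Bestvina--Handel criterion (Theorem \ref{thm_BH}) directly. View $\beta_n$ as the mapping class of a homeomorphism $f$ of the $2n$--punctured disk $D$, and take as graph $G$ the standard spine consisting of a loop $c_i$ around each puncture $p_i$ together with $2n-1$ real edges $e_1,\ldots,e_{2n-1}$, where $e_j$ joins a point of $c_j$ to a point of $c_{j+1}$. This graph contains $C=\{c_1,\ldots,c_{2n}\}$ as a subgraph, has no vertices of valence $\leq 2$, and is homotopy equivalent to $D\setminus P$. Each standard generator $\sigma_i^{\pm 1}$ induces the usual half--twist graph map: it swaps $c_i\leftrightarrow c_{i+1}$, modifies the three edges $e_{i-1},e_i,e_{i+1}$ by inserting short peripheral subpaths, and fixes the remaining edges.

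The first step is to compute the induced graph map $g=g_{\beta_n}$ by composing these generator actions in the order prescribed by $\beta_n=X_n^3\cdot[-1,-2,\ldots,-(n-1),n,n-1,\ldots,2,1,1,2,\ldots,n]$, and to record $g(e_j)$ as a reduced edge path for each $j=1,\ldots,2n-1$. Because the defining word for $\beta_n$ has a uniform shape in $n$, the resulting images should admit a closed--form description, which I would establish by induction on $n$. From these paths the transition matrix $\mathcal{T}_g$, and in particular its real block $\mathcal{T}_g^{\mathrm{re}}$, can then be read off directly.

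Two conditions then remain to be verified: $g$ must be efficient, and $\mathcal{T}_g^{\mathrm{re}}$ must be irreducible with Perron--Frobenius eigenvalue strictly greater than $1$. Irreducibility should follow by inspection, since the full--twist--like factor $X_n^3$ mixes every real edge with every other one, so that a low power of $\mathcal{T}_g^{\mathrm{re}}$ becomes strictly positive. The eigenvalue bound $\lambda(\mathcal{T}_g^{\mathrm{re}})>1$ can then be obtained either by exhibiting an explicit positive vector $v$ with $\mathcal{T}_g^{\mathrm{re}}v>v$ componentwise, or by noting that the total real--edge length of $\bigsqcup_j g(e_j)$ is comparable to the positive exponent sum of $\beta_n$, which far exceeds $2n-1$ for every $n\geq 2$.

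The main obstacle is checking efficiency uniformly in $n$. Back tracks are likely to arise because the negative block $[-1,-2,\ldots,-(n-1)]$ appearing just after $X_n^3$ can cancel against the tails of the preceding positive segment. My plan is to run the Bestvina--Handel folding algorithm symbolically: whenever a back track $\cdots e_i e_i\cdots$ would appear in some $g^m(e)$, I fold the offending pair of edge--germs at the relevant valence--$\geq 3$ vertex and update the graph map accordingly. The crucial point is to show that this folding process stabilises after a number of steps that does not depend on $n$, yielding a single efficient train--track--type representative that works for the entire family; once this is established, Theorem \ref{thm_BH} immediately produces a pseudo--Anosov representative with dilatation $\lambda(\mathcal{T}_g^{\mathrm{re}})>1$ for every $n\geq 2$.
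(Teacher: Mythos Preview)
Your outline has the right overall shape, but the part you yourself flag as ``the main obstacle'' is in fact a genuine gap, and the paper handles it in a way quite different from what you propose.

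With the standard linear spine on $2n-1$ real edges, the induced graph map of $\beta_n$ is \emph{not} efficient; you already anticipate this. Your remedy is to run Bestvina--Handel folding symbolically and argue that it terminates after a number of folds independent of $n$. But you give no mechanism for why that bound should hold, and in practice the negative block $[-1,\ldots,-(n-1)]$ interacting with the long positive prefix tends to produce a cascade of folds whose length grows with $n$. Establishing uniform termination and then extracting a closed--form description of the stabilised train track from the fold sequence is exactly the hard part; without it, irreducibility and $\lambda>1$ cannot be checked either, since they refer to the transition matrix of the \emph{efficient} representative, not the original one.

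The paper avoids this entirely by two moves you do not make. First, it replaces $\beta_n$ by the conjugate
\[
\beta'_n=[n,n-1,\ldots,2,1,1,2,\ldots,n]\cdot X_n^3\cdot[-1,-2,\ldots,-(n-1)],
\]
which reorganises the word so that the negative tail sits at the end. Second, and more importantly, it does \emph{not} use the standard spine: it takes a hand--crafted graph with $2n+2$ real edges (two more than your $2n-1$) for which the induced graph map is efficient from the outset. With that graph the images $g(e_i)$ have an explicit closed form, and efficiency is verified by a two--line argument: any back track would have to occur at $e_n$, forcing a subword $e_{2n+2}e_{2n}$ or $e_{2n}e_{2n+2}$ in some $g^{k-1}(e)$, which never appears. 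Irreducibility and $\lambda>1$ then follow by inspection. For $n=2$ the paper simply cites a computer check.

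So the missing idea is: rather than starting from the generic spine and folding, guess the correct invariant train track (with extra edges) for a conjugate of $\beta_n$, and verify efficiency directly.
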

\begin{proof}
When $n=2$, the braid $\beta_2$ is pseudo-Anosov by the Sage Mathematics Software System \cite{Sage}. Assume that $n\ge 3$.

Consider the braid 
\[
\beta'_n=[n,n-1,n-2,\ldots,2,1,1,2,3,\ldots,n]\cdot X_n^3\cdot [-1,-2,\ldots,-(n-1)],
\]
which is conjugate to $\beta_n$.

\begin{figure}
\centering
\includegraphics[scale=0.13]{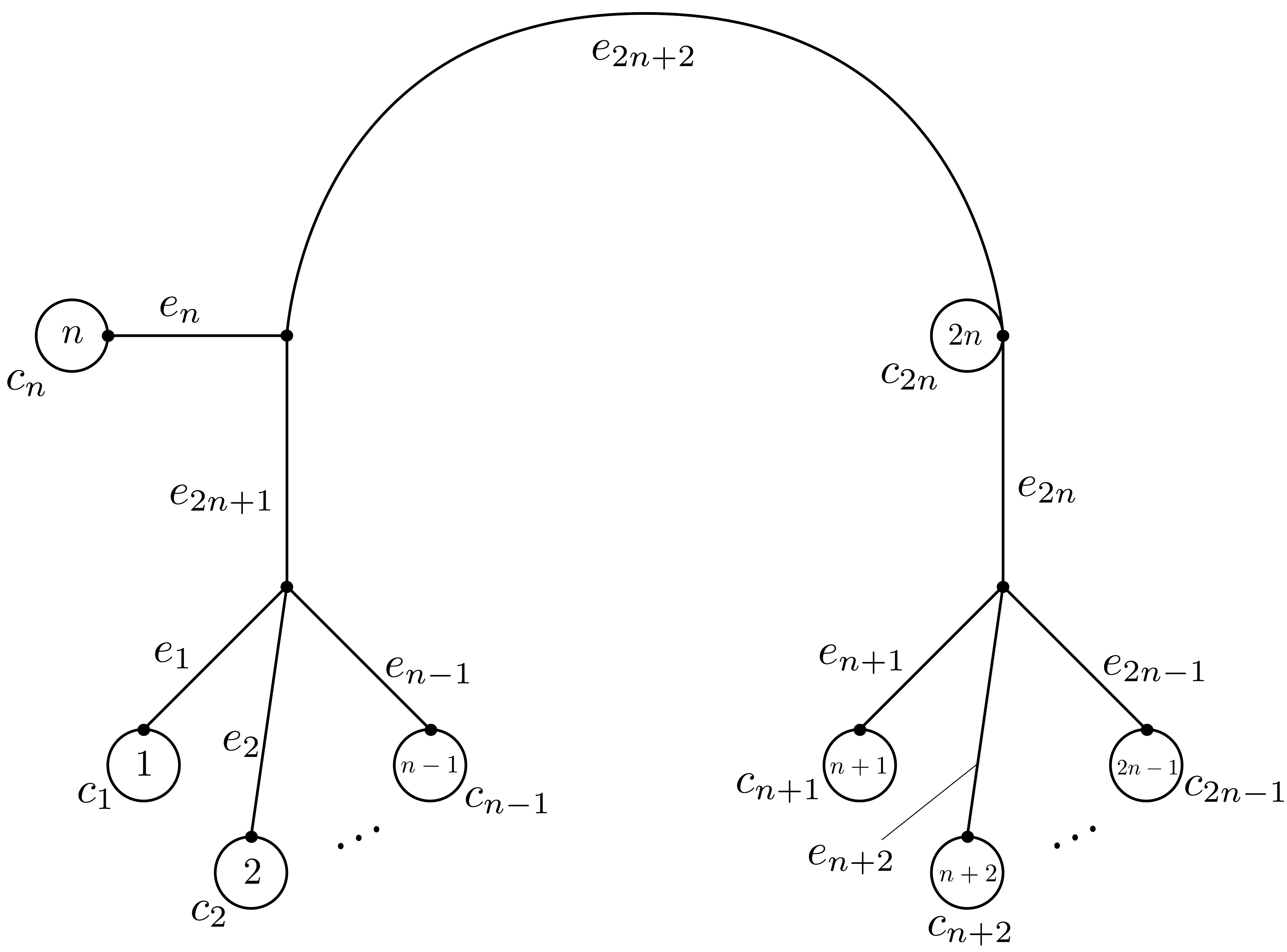}
\caption{The graph $G$ embedded in $D\backslash \{p_1,\ldots,p_{2n}\}$. Each integer inside a circle $c_i$ represents a puncture.}
\label{graph}
\end{figure}

\begin{figure}
\centering
\includegraphics[scale=0.08]{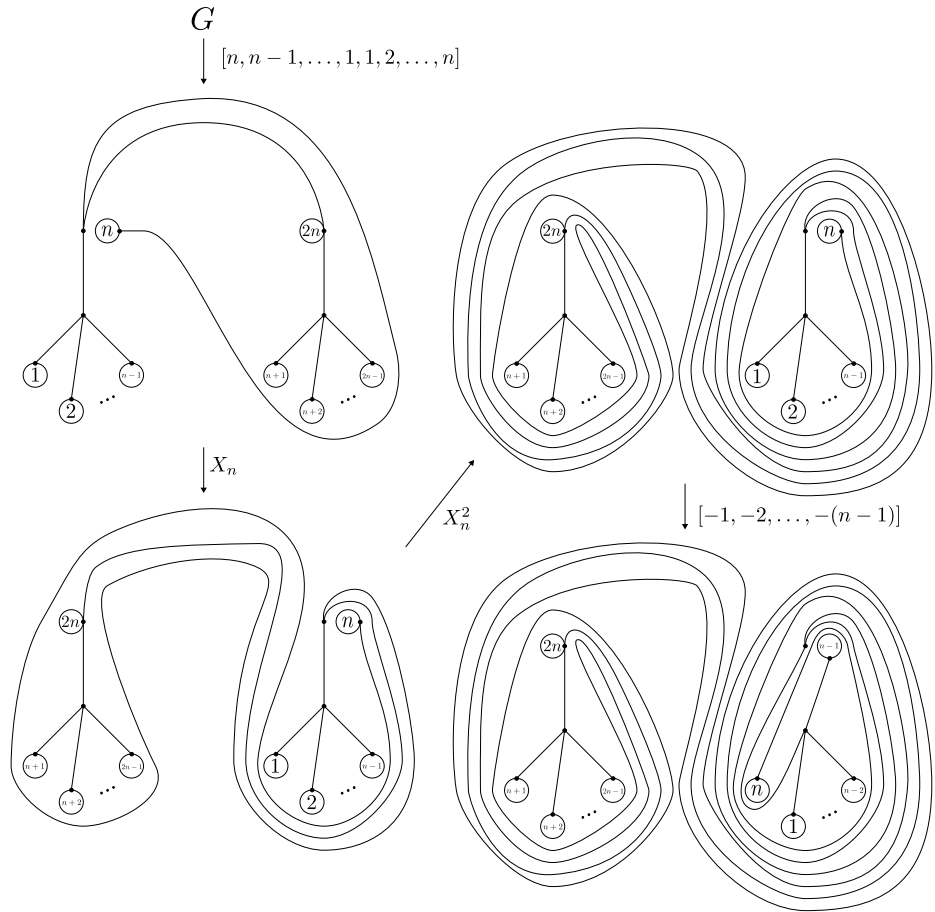}
\caption{The deformation of $G$ induced by the braid $\beta'_n$.}
\label{graph_deform_1_2}
\end{figure}

\begin{figure}
\centering
\includegraphics[scale=0.12]{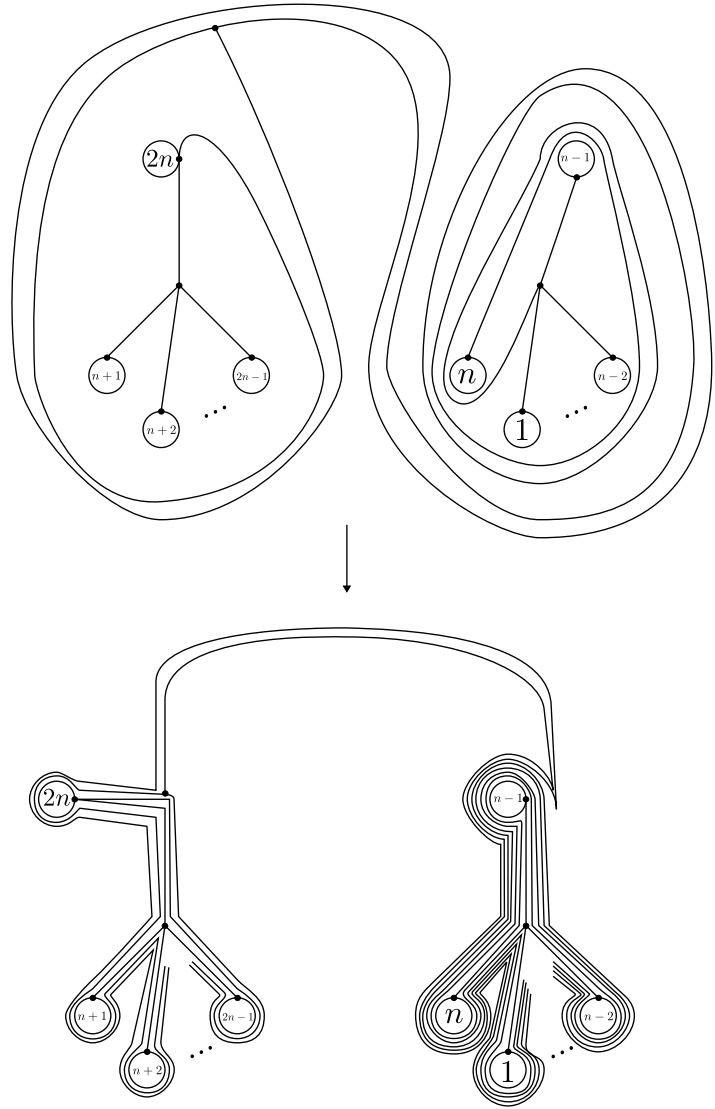}
\caption{Continued from Figure \ref{graph_deform_1_2}. Here, the deformation from the right bottom in Figure \ref{graph_deform_1_2} to the bottom of this figure is by an isotopy.}
\label{graph_deform_3}
\end{figure}

Let $G$ be a graph embedded in $D\backslash \{p_1,\ldots,p_{2n}\}$ as shown in Figure \ref{graph}. 
Figures \ref{graph_deform_1_2} and \ref{graph_deform_3} illustrate a deformation of $G$ induced by the braid $\beta'_n$. The induced graph map $g$ acts on the real edges as follows: 
\begin{align*}
g(e_i) & =e_{n+1+i}\ \text{for}\ i=1,\ldots,n-1,\\
g(e_{n+i}) & =e_i\ \text{for}\ i=1,\ldots,n-1,\\
g(e_n) & =e_{2n+1}e_{n-1}c_{n-1}\cdots e_{2n}c_{2n}e_{2n}e_{n+1},\\
g(e_{2n}) & =e_{2n+1},\\
g(e_{2n+1}) & =e_{n+1}c_{n+1}e_{n+1}\cdots e_{2n}c_{n-1}e_{2n+2},\ \text{and}\\
g(e_{2n+2}) & = e_n c_n e_n\cdots e_{n-1} e_{2n+1}e_n.
\end{align*}
Note that $e_1,\ldots,e_{2n+2}$ are real edges, and $g(e)$ has no back tracks for any real edge $e$.

Suppose that $g^k(e)$ has a back track for some $e\in E^{\textrm{re}}(G)$ and $k\ge 1$. Then it must occur at $e_n$, that is, $g^k(e)$ contains a sub-walk of the form
\[
g^k(e)=\cdots e_{2n+1} e_n e_n e_{2n+1} \cdots.
\]
This implies that $g^{k-1}(e)$ contains a sub-walk of the form 
\[
g^{k-1}(e)=\cdots e_{2n+2} e_{2n}\cdots \ \text{or}\ \cdots e_{2n}e_{2n+2}\cdots,
\]
which does not occur by Figure \ref{graph_deform_3}. Hence, $g$ is efficient.

Since $g(e_n)$ passes through all real edges of $G$, and for any real edge $e$, there exists $k\ge 1$ such that $g^k(e)$ passes through $e_n$, it follows that some power $(\mathcal{T}_g^{{\rm re}})^m$ of the transition matrix has all entries that are positive integers. Moreover, we can choose $m$ such that the trace of $(\mathcal{T}_g^{{\rm re}})^m$ is greater than $\# E^{\textrm{re}}(G)$. Therefore, $(\mathcal{T}_g^{{\rm re}})$ is irreducible and satisfies $\lambda(\mathcal{T}_g^{\textrm{re}})>1$. By Theorem \ref{thm_BH}, $\beta_n'$ is pseudo-Anosov, and hence so is $\beta_n$.
\end{proof}

\begin{proposition}\label{prop_hyperbolic}
$K_n$ $(n\ge 2)$ is a hyperbolic knot.
\end{proposition}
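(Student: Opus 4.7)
The plan is to combine Lemma \ref{lem_pA} with Thurston's hyperbolization theorem for surface bundles and then to eliminate the two possible non--hyperbolic structures a knot in $S^3$ can have. Let $A$ denote the braid axis of $\beta_n$, which is an unknot in $S^3$. The complement $S^3 \setminus A$ is the open solid torus $D^2 \times S^1$, and $S^3 \setminus (K_n \cup A)$ is the total space of a surface bundle over $S^1$ whose fiber is the $2n$--punctured disk and whose monodromy is the mapping class of $\beta_n$. By Lemma \ref{lem_pA} this monodromy is pseudo--Anosov, and the fiber has negative Euler characteristic, so Thurston's fibered hyperbolization theorem gives that $S^3 \setminus (K_n \cup A)$ is hyperbolic; in particular, $K_n \cup A$ is a hyperbolic two--component link, and its complement contains no essential tori.

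By geometrization, it then suffices to show $K_n$ is neither a satellite knot nor a torus knot. For the satellite case, suppose $T \subset S^3 \setminus K_n$ were an essential torus, and isotope $T$ to be disjoint from $A$. Since $S^3 \setminus (K_n \cup A)$ is hyperbolic, $T$ cannot remain essential there. A compressing disk in $S^3 \setminus (K_n \cup A)$ would also sit in $S^3 \setminus K_n$ and contradict essentiality of $T$; the remaining alternative is that $T$ is boundary--parallel in $S^3 \setminus (K_n \cup A)$, and since $T$ is not parallel to $\partial N(K_n)$ it must be parallel to $\partial N(A)$. But then $T$ bounds an unknotted solid torus in $S^3$ containing $A$, whose meridian disk compresses $T$ inside $S^3 \setminus K_n$, again a contradiction. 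Hence $K_n$ admits no essential tori, so it is not a satellite knot.

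For the torus--knot case, when $n$ is even, Proposition \ref{prop_non_braid_positive} shows $K_n$ is not braid positive, whereas every non--trivial torus knot $T(p,q)$ with $p,q>0$ is braid positive; so $K_n$ cannot be a torus knot. For odd $n$ I would instead compute $g(K_n)$ via a quasi--positive Seifert surface analogous to that of Lemma \ref{lem_genus} (or extract it from the Alexander polynomial), enumerate the finitely many $T(p,q)$ with that genus, and distinguish each candidate from $K_n$ using the top--degree term of the HOMFLY polynomial from Lemma \ref{lem_Kn_topterm} together with the Morton--Franks--Williams lower bound on braid index.

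The main obstacle I anticipate is precisely this torus--knot exclusion for odd $n$: unlike the satellite argument, it does not admit a single uniform geometric statement, and one must carry out a finite case analysis of candidate torus knots compatible with the genus and then rule each out by an explicit invariant comparison.
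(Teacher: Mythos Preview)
Your satellite--knot argument contains a genuine gap at the step ``isotope $T$ to be disjoint from $A$''. An essential torus $T\subset S^3\setminus K_n$ and the axis $A$ generically intersect in finitely many points, and there is no general mechanism for removing these intersections by an isotopy of $T$ in $S^3\setminus K_n$; homological constraints coming from how $A$ links curves on $T$ can obstruct it. Indeed, the Nielsen--Thurston type of a braid does \emph{not} by itself determine the geometric type of its closure: for instance $\sigma_1\sigma_2^{-1}\in B_3$ is pseudo--Anosov yet closes to the unknot, so some further hypothesis beyond Lemma~\ref{lem_pA} is genuinely required. The remainder of your argument (compressibility versus boundary--parallelism of $T$ inside the hyperbolic manifold $S^3\setminus(K_n\cup A)$) is correct, but it only applies once $T$ is already disjoint from $A$, and that is exactly the step you have not established.

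The paper closes this gap by invoking Ito's theorem \cite[Theorem~1.3]{Ito11}: a pseudo--Anosov braid whose Dehornoy floor exceeds $1$ has hyperbolic closure. The floor condition is precisely the extra control that forces essential surfaces in the closed--braid complement to respect the braid fibration, which is what your isotopy step was trying to achieve by hand. The paper verifies this condition for the conjugate $X_n\beta_nX_n^{-1}$ by rewriting $\Delta^{-4}X_n\beta_nX_n^{-1}$ as a $\sigma_1$--positive word. This argument is uniform for all $n\ge 2$, whereas your torus--knot exclusion is carried out only for even $n$ (and even there treats only positive torus knots; negative torus knots still need to be excluded, e.g.\ via quasipositivity of $K_n$), and is left as an unfinished case analysis for odd $n$.
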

\begin{proof}
Note that $K_n$ can be obtained by the closure of the braid $X_n\beta_nX^{-1}_n$, and $X_n\beta_nX^{-1}_n$ is also pseudo-Anosov by Lemma \ref{lem_pA}.

\begin{figure}
\centering
\includegraphics[scale=0.08]{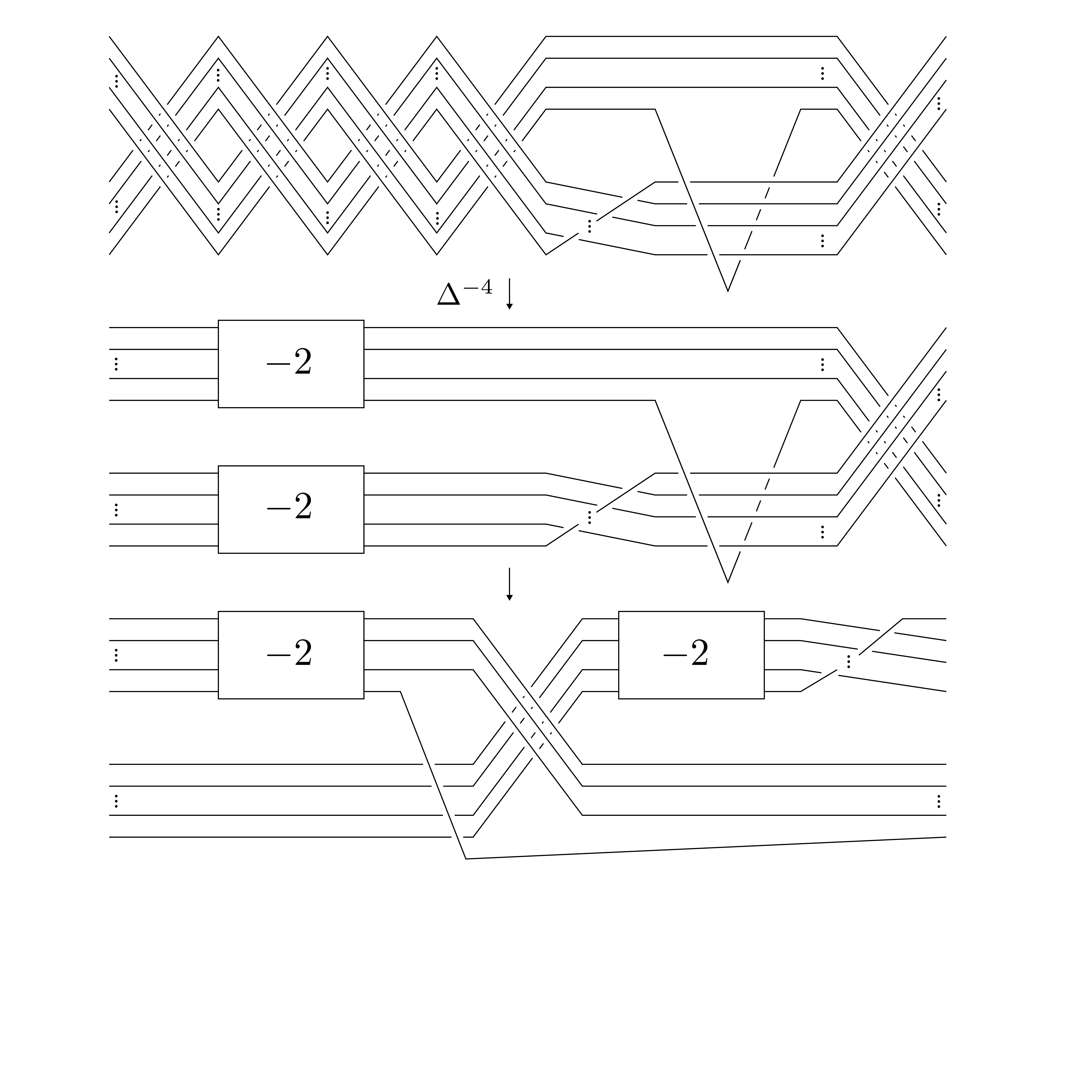}
\caption{(Top) The braid $X_n\beta_nX_n^{-1}$. (Middle) The braid obtained by rewriting the braid $\Delta^{-4}X_n\beta_nX_n^{-1}$, where $\Delta$ is the Garside fundamental $2n$--braid. Here the box with $-2$ indicates two left-handed full-twists. (Bottom) The braid obtained by further modifying the middle braid. Since this braid is $\sigma_1$--positive, we have $\Delta^4<_{\textrm{D}}X_n\beta_nX_n^{-1}$ where $<_{\textrm{D}}$ is the Dehornoy order.}
\label{K_n_Dehornoy}
\end{figure}

By Figure \ref{K_n_Dehornoy}, the Dehornoy floor of $X_n\beta_nX^{-1}_n$ is greater than $1$ (see \cite{Ito11} for definition of the Dehornoy floor).
Therefore, by Theorem 1.3 of \cite{Ito11}, the closure of $\beta_n$, namely $K_n$, is a hyperbolic knot.
\end{proof}

\begin{proof}[Proof of Theorem \ref{thm_main}]
By Propositions \ref{prop_non_braid_positive}, \ref{prop_Lspace} and \ref{prop_hyperbolic}, we have the conclusion.
\end{proof}


\end{document}